\documentclass[12pt]{amsart}
\usepackage{fullpage}
\usepackage{amsmath, amsthm, amssymb}
\usepackage{enumerate}
\usepackage[centertableaux]{ytableau}
\usepackage{todonotes}

\usepackage{tikz}
\tikzstyle{v}=[circle, draw, fill=white,
inner sep=0pt, minimum width=4pt]
\tikzstyle{w}=[circle, draw, fill=black,
inner sep=0pt, minimum width=4pt]
\newcommand{\dr}[1]{\rule[-3ex]{0pt}{0pt}\rule[4ex]{0pt}{0pt} \raisebox{\dimexpr-.5\height+.5\ht\strutbox\relax}{\tikz\draw#1;}}


\newtheorem{thm}{Theorem}[section]

\newtheorem{prop}[thm]{Proposition}
\newtheorem{cor}[thm]{Corollary}

\theoremstyle{definition}

\newtheorem*{defn}{Definition}
\newtheorem{ex}[thm]{Example}

\theoremstyle{remark}
\newtheorem{rmk}[thm]{Remark}

%

\newcommand{\NN}{\mathbf N}
\newcommand{\CC}{\mathbf C}
\newcommand{\Sym}{\mathfrak S}
\newcommand{\sgn}{\operatorname{sgn}}
\newcommand{\Res}{\operatorname{Res}}
\newcommand{\ov}{\overline}
\newcommand{\Stab}{\operatorname{Stab}}
\newcommand{\id}{\mathrm{id}}
\newcommand{\by}{\begin{ytableau}}
\newcommand{\ey}{\end{ytableau}}
\newcommand{\yts}{\ytableaushort}
\renewcommand{\subset}{\subseteq}

\begin{document}

\title{Complete branching rules for Specht modules}
\author{Ricky Ini Liu}
\email{riliu@ncsu.edu}
\address{Department of Mathematics, North Carolina State University, Raleigh, NC 27695}
\maketitle

\begin{abstract}
We give a combinatorial description for when the Specht module of an arbitrary diagram admits a (complete) branching rule. This description, given in terms of the maximal rectangles of the diagram, generalizes all previously known branching rules for Specht modules, such as those given by Reiner and Shimozono for northwest diagrams and by the present author for forest diagrams.
\end{abstract}

\section{Introduction}

One of the basic facts about the representation theory of the symmetric group $\Sym_n$ is that the irreducible representations $S^\lambda$ (where $\lambda$ is a partition of $n$) satisfy a branching rule: there is a multiplicity-free description of the restriction of an irreducible representation from $\Sym_n$ to $\Sym_{n-1}$ via
\[\Res^{\Sym_n}_{\Sym_{n-1}} S^\lambda \cong \bigoplus_\mu S^\mu,\]
where the direct sum ranges over all partitions $\mu$ that can be obtained from $\lambda$ by removing a corner box. This fact forms the basis for much of the combinatorial structure behind the representation theory of $\Sym_n$. (For references, see, for instance, \cite{Fulton, OkounkovVershik, Sagan}.)

One of the standard constructions for $S^\lambda$ is as a \emph{Specht module}, an ideal within the group algebra $\CC[\Sym_n]$ generated by the \emph{Young symmetrizer} of the Young diagram of $\lambda$. This construction can be generalized to diagrams other than partitions by defining the Young symmetrizer in an analogous fashion. In the case of a skew Young diagram $\lambda/\mu$, the resulting skew Specht module $S^{\lambda/\mu}$ is well understood \cite{JamesPeel}. Similarly, the Specht module for the Rothe diagram of a permutation occurs in the theory of reduced decompositions and Stanley symmetric functions \cite{Kraskiewicz}. Specht modules $S^D$ for other classes of diagrams $D$ have also been studied (see, for instance, \cite{Liu, Magyar, ReinerShimozono1, ReinerShimozono2}).

For general diagrams $D$, the structure of $S^D$ is not well understood. However, in some special cases, it is known that $S^D$ satisfies a similar branching rule as the one for partitions, namely
\[\Res^{\Sym_n}_{\Sym_{n-1}} S^D \cong \bigoplus_{x \in B} S^{D \backslash \{x\}}, \tag{$*$} \label{eq-branch}\]
where $B \subset D$ is a specially chosen \emph{branching set}. In particular, such a branching rule is known to exist if $D$ belongs to the class of \emph{northwest diagrams} \cite{ReinerShimozono2} or \emph{forest diagrams} \cite{Liu}. As these two classes are rather distinct and the proofs in both cases are quite different, a natural question is whether these results can be unified with a single explanation. 

The main result of this paper gives a combinatorial characterization of when a diagram $D$ \emph{branches completely}, that is, when there exists a subset $B \subset D$ such that \eqref{eq-branch} holds (along with a technical condition) and $D \backslash \{x\}$ also branches completely for each $x \in B$. The essential condition is that $B$ must contain exactly one box from each maximal rectangle of $D$ (see Theorem~\ref{thm-main}). From this characterization, it is easy to deduce all previously known branching rules for general Specht modules---in particular, we will show how to deduce those from \cite{ReinerShimozono2} and \cite{Liu}. We will also show that the complete set of relations defining $S^D$ for any diagram $D$ that branches completely is generated by certain \emph{generalized Garnir relations}, which are multi-column analogues of the two-column Garnir relations that generate the relations for irreducible $S^\lambda$. 

We begin in \S2 with some background about Specht modules and known branching rules. In \S3 we describe the generalized Garnir relations and show the existence of certain maps between Specht modules of different shapes. In \S4, we will use these results to prove our main result on completely branching diagrams. We will also show how to deduce the branching rules of \cite{ReinerShimozono2} and \cite{Liu} from our main result. Finally, in \S5 we end with some concluding remarks and questions.

\section{Preliminaries}
We begin by defining some notation and reviewing some background about symmetric group representations. See, for instance, \cite{Sagan} for more information.

\subsection{Specht modules}
A \emph{diagram} $D$ is a finite subset of $\NN \times \NN$. We associate $\NN \times \NN$ with the boxes in a grid, where $(i,j)$ denotes the box in the $i$th row from the top and the $j$th column from the left. At times, it may be convenient to think of $D$ as a graph in the following way: given a diagram $D$, construct an associated bipartite graph $G=G(D)$ with vertex set $\{v_1, v_2, \dots\} \cup \{w_1, w_2, \dots\}$ in which $v_i$ is adjacent to $w_j$ if $(i,j) \in D$.

Given a diagram $D$, a \emph{tableau} $T$ of shape $D$ is a labeling of the boxes of $D$ with nonnegative integers. We say $T$ is \emph{injective} if each label $1, 2, \dots, n$ appears exactly once, where $n=|D|$. We say a tableau is \emph{row-strict} (resp. \emph{column-strict}) if each label appears at most once in a row (resp. column). If $x = (i,j) \in D$, we will write $T_x = T_{ij}$ for the label of $x$ in $T$.

A \emph{partition} $\lambda = (\lambda_1, \lambda_2, \dots, \lambda_k)\vdash n$ is a sequence of nonincreasing positive integers summing to $n$. The \emph{Young diagram} of a partition, which we also denote by $\lambda$, is the set of boxes $(i, j)$ with $1 \leq j \leq \lambda_k$ for $1 \leq i \leq k$. 

Let $\Sym_n$ be the symmetric group on $n$ letters and $\CC[\Sym_n]$ its group algebra. To any diagram $D$ with $n$ boxes, we construct a $\CC[\Sym_n]$-module as follows. Label the boxes of $D$ arbitrarily with the numbers $1, \dots, n$, and let $\Sym_n$ act on the boxes in the natural way. Let $R_D \subset \Sym_n$ be the subgroup that stabilizes each row of $D$, and let $C_D \subset \Sym_n$ be the subgroup that stabilizes each column of $D$. Then define $R(D), \ov C(D) \subset \CC[\Sym_n]$ by
\begin{align*}
R(D) &= \sum_{\sigma \in R_D} \sigma, & \ov C(D) &= \sum_{\sigma \in C_D} \sgn(\sigma) \cdot \sigma.
\end{align*}
Then the \emph{Specht module} $S^D$ is the left ideal $\CC[\Sym_n]\ov C(D)R(D)$. The Specht modules $S^\lambda$ for $\lambda \vdash n$ are precisely the irreducible representations of $S_n$. Note that permuting rows or permuting columns of $D$ does not change $S^D$ up to isomorphism. If two diagrams can be obtained from one another in this way, we will say that they are \emph{equivalent}.

Typically when working with a diagram $D$, we will think of elements of $\CC[\Sym_n]$ as formal linear combinations of injective tableaux of shape $D$. Then multiplication on the right by an element of $\Sym_n$ corresponds to applying a permutation to the boxes of $D$, while multiplication on the left corresponds to applying a permutation to the labels $1, \dots, n$. (Note that the right action of $\Sym_n$ can be applied to any tableau, not just injective tableaux.) Given an injective tableau $T$ of shape $D$, we will write $e_T = T \cdot \ov C(D)R(D) \in S^D$. 

\subsection{Branching}

The irreducible representations $S^\lambda$ satisfy the following \emph{branching rule}. A \emph{corner box} of $\lambda$ is a box $x \in \lambda$ such that $\lambda \backslash \{x\}$ is also the Young diagram of a partition. Equivalently, $x$ is the rightmost box of $\lambda$ in its row and the bottommost box of $\lambda$ in its column.

If $B$ is the set of corner boxes of $\lambda$, then the restriction of $S^\lambda$ from $\Sym_n$ to $\Sym_{n-1}$ (where $\Sym_{n-1}$ acts on the first $n-1$ letters in the usual way) can be decomposed into irreducibles as
\[\Res^{\Sym_n}_{\Sym_{n-1}} S^\lambda \cong \bigoplus_{x \in B} S^{\lambda \backslash \{x\}}.\]

Although it is not known in general how to decompose $S^D$ into irreducible representations, the most general known result for \emph{percentage-avoiding diagrams} is due to Reiner and Shimozono \cite{ReinerShimozono2}. This description is then used to give a branching rule for a subclass of diagrams called \emph{northwest diagrams}.

\begin{defn}
A diagram $D$ is \emph{northwest} if whenever $(i_1, j_2), (i_2, j_1) \in D$ for some $i_1<i_2$ and $j_1<j_2$, we also have $(i_1,j_1) \in D$.

A diagram $D$ has its rows in \emph{initial segment order} if whenever row $i_1$ (thought of as a subset of $\NN$) is a proper initial segment of row $i_2$, we have $i_1 < i_2$.  
\end{defn}
It is shown in \cite{ReinerShimozono2} that one can always rearrange the rows of a northwest diagram $D$ to form a new northwest diagram with rows in initial segment order. The following theorem summarizes the branching rule for northwest diagrams.
\begin{thm}[\cite{ReinerShimozono2}]\label{thm-northwest}
Let $D$ be a northwest diagram with $n$ boxes whose rows are in initial segment order. Let $B$ be the set of boxes $x$ such that $x$ is bottommost in its column, and no box $y$ that is bottommost in its column lies to the left of $x$ (in the same row). Then
\[\Res^{\Sym_n}_{\Sym_{n-1}} S^D \cong \bigoplus_{x \in B} S^{D \backslash \{x\}}.\]
\end{thm}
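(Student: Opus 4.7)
The plan is to construct, for each $x \in B$, an explicit $\Sym_{n-1}$-equivariant map
\[\phi_x \colon S^{D \setminus \{x\}} \longrightarrow \Res^{\Sym_n}_{\Sym_{n-1}} S^D, \qquad e_T \mapsto e_{T^+},\]
where $T^+$ is the injective tableau of shape $D$ that agrees with $T$ on $D \setminus \{x\}$ and carries the label $n$ in the box $x$, and then to show that these maps fit together into an isomorphism. The first step is a combinatorial check: for each $x \in B$ the diagram $D \setminus \{x\}$ must, after possibly reordering its rows, again be a northwest diagram with rows in initial segment order. The two defining properties of $B$ are designed precisely for this. Requiring $x$ to be bottommost in its column prevents the creation of a new northwest violation after deletion, while requiring that no other bottommost-in-column box lie to the left of $x$ in its row ensures that the shortened row can still be arranged in initial segment order relative to the others.

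Next, I would verify that each $\phi_x$ is well defined and injective. Well-definedness amounts to checking that the defining relations of $S^{D \setminus \{x\}}$ are mapped to zero under the substitution $T \mapsto T^+$. Column antisymmetrization transfers because every column of $D \setminus \{x\}$ sits inside a column of $D$, so the column stabilizer of $D \setminus \{x\}$ is a subgroup of that of $D$. The row symmetrizations also transfer because the fresh label $n$ is not involved in any sum ranging over permutations of $\{1, \dots, n-1\}$. Injectivity follows from the observation that the restriction map $T \mapsto T|_{D \setminus \{x\}}$ gives a well-defined left inverse on the image.

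The main obstacle is establishing that $\bigoplus_{x \in B} \phi_x$ is surjective, or equivalently that $\sum_{x \in B} \dim S^{D \setminus \{x\}} \geq \dim S^D$. Here I would invoke the Reiner--Shimozono structural result for percentage-avoiding diagrams---which covers all northwest diagrams---decomposing $S^D$ into a direct sum of irreducible partition Specht modules indexed by certain combinatorial objects. Applying the classical partition branching rule to each irreducible constituent and then reassembling should yield precisely $\sum_{x \in B} \chi(S^{D \setminus \{x\}})$, giving the required dimension count. The most delicate point will be matching the combinatorial data produced after branching on the left-hand side with the analogous data produced by the decompositions of $S^{D \setminus \{x\}}$ for $x \in B$, and it is here that the precise choice of $B$ in terms of bottommost-in-column boxes becomes essential.
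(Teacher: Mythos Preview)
Your proposed map $\phi_x\colon S^{D\setminus\{x\}}\to S^D$, $e_T\mapsto e_{T^+}$, is not well defined. You only check that one-column relations transfer, but $S^{D\setminus\{x\}}$ is not presented by one-column relations alone: there are Garnir relations (and, for general diagrams, more). A two-column Garnir relation in $D\setminus\{x\}$ involving the column from which $x$ was deleted need not lift to a relation in $D$, because that column is strictly longer in $D$; indeed, the whole content of the paper's Theorem~\ref{thm-main} is that lifting such relations requires the extra generalized-Garnir machinery and the exact-hitting-set hypothesis on $B$. The correct elementary map goes the other way: Proposition~\ref{prop-filtration} builds surjections $\varphi_i\colon V_i\twoheadrightarrow S^{D\setminus\{b_i\}}$ by \emph{removing} the box carrying $n$, and Corollary~\ref{cor-subrep} then gives $\dim S^D\ge\sum_{x\in B}\dim S^{D\setminus\{x\}}$ once you have checked that $B$ is a special transversal (which you do not explicitly do, though it is easy: order $B$ from bottom to top).

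With the direction reversed, your strategy reduces to: (i) get the inequality from the filtration, then (ii) match dimensions by invoking the Reiner--Shimozono irreducible decomposition of $S^D$ for percentage-avoiding diagrams and applying the classical branching rule to each piece. This is essentially the original argument in \cite{ReinerShimozono2}, so it is circular as a proof of Theorem~\ref{thm-northwest} if that theorem is taken to be their result. The paper's route is genuinely different and more self-contained: it never uses the irreducible decomposition. Instead it verifies, purely combinatorially, that $B$ is an exact hitting set for the maximal rectangles of $D$, and then applies the main criterion Theorem~\ref{thm-main}. The payoff of the paper's approach is that it requires no prior knowledge of how $S^D$ decomposes, and it simultaneously exhibits generalized Garnir relations as a complete set of defining relations; the payoff of the Reiner--Shimozono route is that, when the decomposition is already in hand, the dimension count is immediate.
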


Theorem~\ref{thm-northwest} generalizes known branching rules for straight and skew Young diagrams, as well as column-convex diagrams \cite{ReinerShimozono1} and permutation diagrams \cite{Kraskiewicz}.

Separately, in \cite{Liu}, a branching rule is given for \emph{forest diagrams}.
\begin{defn}
A diagram $D$ is a \emph{forest diagram} if the corresponding graph $G(D)$ is a forest.

An \emph{almost perfect matching} $M$ of a forest $G$ is a set of edges that contains every isolated edge as well as exactly one edge incident to any vertex of degree greater than 1.
\end{defn}
It is shown in \cite{Liu} that a forest always has an almost perfect matching and that the following branching rule holds.
\begin{thm}[\cite{Liu}]\label{thm-forest}
Let $D$ be a forest diagram with $n$ boxes, and let $B$ be the set of boxes corresponding to any almost perfect matching of $G(D)$. Then
\[\Res^{\Sym_n}_{\Sym_{n-1}} S^D \cong \bigoplus_{x \in B} S^{D \backslash \{x\}}.\]
\end{thm}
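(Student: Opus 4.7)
The plan is to construct, for each $x \in B$, a $\Sym_{n-1}$-equivariant map
\[\varphi_x : S^{D \setminus \{x\}} \to \Res^{\Sym_n}_{\Sym_{n-1}} S^D,\]
and then argue that their direct sum gives an isomorphism of $\CC[\Sym_{n-1}]$-modules. I would fix a labeling of $D$ with $\{1, \ldots, n\}$ in which $\Sym_{n-1}$ permutes the labels $\{1, \ldots, n-1\}$, so that the label $n$ is free to mark whichever branching box is under consideration.

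The map $\varphi_x$ would be defined on the standard generators of $S^{D \setminus \{x\}}$: for an injective tableau $T$ of $D \setminus \{x\}$ with labels $1, \ldots, n-1$, let $T^+$ be the injective tableau of $D$ formed by placing $n$ at $x$, and set $\varphi_x(e_T) = e_{T^+}$. The $\Sym_{n-1}$-equivariance is immediate since $\Sym_{n-1}$ does not disturb $x$. Well-definedness requires showing that every row/column relation in $S^{D \setminus \{x\}}$ lifts to a valid relation in $S^D$; this is plausible because the row and column stabilizers of $D \setminus \{x\}$ embed naturally into those of $D$ as the stabilizers of the label $n$, but the argument requires care in handling the additional factors of the Young symmetrizer coming from the other boxes in $x$'s row and column.

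The central task is showing that $\bigoplus_{x \in B} \varphi_x$ is an isomorphism. I would proceed in two steps. First, prove surjectivity: for any injective tableau $T$ of $D$ with the label $n$ at a position $y$, rewrite $e_T$ as a linear combination of $e_{T'}$ with $n$ at positions in $B$, using Garnir relations internal to $D$. The almost perfect matching on $G(D)$ organizes this rewriting: following the tree structure of $G(D)$, the edge at $y$ is joined to some matched edge by a short path, and each step along this path corresponds to a pair of boxes sharing a row or a column of $D$, providing a Garnir relation that migrates $n$ one step closer to $B$. Second, for injectivity (equivalently, directness of the sum), I would compare dimensions using a combinatorial formula for $\dim S^D$ specific to forest diagrams---obtainable, for instance, via an independent generating-function argument or by exhibiting an explicit spanning set of standard tableaux of $D$---showing that $\dim \Res S^D$ equals $\sum_{x \in B} \dim S^{D \setminus \{x\}}$.

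The main obstacle I anticipate is the surjectivity step: one must verify that the migration of $n$ from $y$ to $B$ stays within $S^D$, meaning that every intermediate term in the rewriting corresponds to an honest tableau of shape $D$. The forest hypothesis is essential here, since in a diagram whose graph $G(D)$ contains a cycle, boundary terms can arise in which the label $n$ would need to sit outside $D$, obstructing a clean migration. The absence of cycles in $G(D)$ reduces the problem to local row/column swaps supported on single rectangles of $D$, and the almost perfect matching distinguishes a consistent set of target boxes toward which all such migrations can be uniformly directed.
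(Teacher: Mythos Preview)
Your proposed map $\varphi_x\colon S^{D\setminus\{x\}}\to S^D$, $e_T\mapsto e_{T^+}$, is \emph{not well-defined}, and this is not a matter of bookkeeping but a genuine obstruction. Take $D$ to be the partition $(2,1)$ and $x=(2,1)$. Then $D\setminus\{x\}$ is a single row of two boxes, and in $S^{D\setminus\{x\}}$ one has $e_{\yts{12}}=e_{\yts{21}}$. But in $S^{(2,1)}$ a direct computation gives
\[
e_{\yts{12,3}}-e_{\yts{21,3}}=e_{\yts{13,2}}\neq 0.
\]
The structural reason is that a relation $\sum c_T\,T\cdot\ov C(D\setminus\{x\})R(D\setminus\{x\})=0$ says nothing about $\sum c_T\,T^+\cdot\ov C(D)R(D)$, because $\ov C(D)R(D)$ is not a right multiple of $\ov C(D\setminus\{x\})R(D\setminus\{x\})$: the extra column and row factors sit \emph{between} the two symmetrizers and do not commute past either one.

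The paper runs the maps in the opposite direction, which is the direction that comes for free. For each $b_i\in B$ one defines $\varphi_i$ on the span $V_i$ of those $e_T$ with $n$ in some $b_j$, $j\ge i$, by deleting the box containing $n$; this lands in $S^{D\setminus\{b_i\}}$ precisely because $C_{D\setminus\{b_i\}}$ and $R_{D\setminus\{b_i\}}$ are the stabilizers of $b_i$ inside $C_D$ and $R_D$ (Proposition~\ref{prop-filtration}). One obtains a filtration $0=V_{k+1}\subset\cdots\subset V_1\subset S^D$ with surjections $V_i/V_{i+1}\twoheadrightarrow S^{D\setminus\{b_i\}}$, and the content of the theorem is that these are isomorphisms. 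In this paper that is deduced from Theorem~\ref{thm-main}: one checks that an almost perfect matching of a forest is a special transversal (no cycles, hence no alternating cycles) and an exact hitting set for the maximal rectangles (maximal bicliques in a forest are stars and isolated edges), and that removing a box keeps $D$ a forest diagram, so induction closes.

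Your surjectivity sketch---migrating the label $n$ toward $B$ by Garnir relations along the tree---is essentially the argument of Proposition~\ref{prop-v1} showing $V_1=S^D$, and is sound. But your injectivity plan via an independent dimension formula for $\dim S^D$ is circular as stated: the known formula for forest diagrams is itself proved from the branching rule, and you have not indicated any other route to it.
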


\begin{ex}
The diagram shown below is both a northwest diagram with rows in initial segment order and a forest diagram. If $B$ is the set of shaded boxes, then both Theorem~\ref{thm-northwest} and Theorem~\ref{thm-forest} apply.
\[\ydiagram{1,3,1+1,2+2} * [*(lightgray)]{0,1,1+1,2+1}\]
\end{ex}

In general, the classes of northwest and forest diagrams do not contain one another. Therefore, we would like to find a common generalization of these two classes. In fact, both Theorem~\ref{thm-northwest} and Theorem~\ref{thm-forest} will follow as a consequence of the more general Theorem~\ref{thm-main} proved below, as we will see in \S4.

\subsection{Garnir relations}
By definition, $S^\lambda$ is linearly spanned by $e_T$ as $T$ ranges over injective tableaux of shape $\lambda$. The simplest relations among the $e_T$ are the one-column relations: for any $\sigma \in C_D$, \[e_{T\sigma} = T\cdot \sigma\ov C(D) \cdot R(D) = T \cdot \sgn(\sigma)\ov C(D) \cdot R(D) = \sgn(\sigma) \cdot e_T.\]

The only additional relations between the $e_T$ in $S^\lambda$ are generated by the \emph{Garnir relations}. These are two-column relations, that is, they describe a relation between some $e_T$ that differ only in two columns.

One description of the Garnir relations can be given as follows. Pick any two columns of $\lambda$, and suppose that they contain boxes in fewer than $a$ distinct rows. (Since $\lambda$ is a partition, $a$ will be larger than the length of the longer column.) Let $A$ be any set of $a$ boxes in these two columns, and let $\Sym_A \subset \Sym_n$ be the subgroup of permutations of $A$. Then for all $T$,
\[\sum_{\pi \in \Sym_A} \sgn(\pi) \cdot e_{T\pi} = 0.\]

\begin{rmk}\label{rmk-mcr}
Using the one-column relations, we can instead sum $\pi$ only over a set of left coset representatives for $\Sym_A \cap C_D$ in $\Sym_A$. This will only change the left hand side by a constant factor and will not affect the arguments below since we work in characteristic 0.
\end{rmk}

\begin{ex}
Let $A$ be the set of shaded boxes in the following tableau:
\[T_1 = \yts{14,25,36} * [*(lightgray)]{1+1,2,1}\,.\]
The corresponding Garnir relation (summing over a set of coset representatives for $\Sym_A \cap C_D$ in $\Sym_A$ as described in Remark \ref{rmk-mcr}) is $e_{T_1} - e_{T_2} + e_{T_3} + e_{T_4} - e_{T_5} + e_{T_6}=0$, where
\[T_2 = \yts{13,25,46} * [*(lightgray)]{1+1,2,1}\,, \qquad
T_3 = \yts{13,24,56} * [*(lightgray)]{1+1,2,1}\,, \qquad
T_4 = \yts{12,35,46} * [*(lightgray)]{1+1,2,1}\,, \qquad
T_5 = \yts{12,34,56} * [*(lightgray)]{1+1,2,1}\,, \qquad
T_6 = \yts{12,43,56} * [*(lightgray)]{1+1,2,1}\,.\]  
\end{ex}

Although the Garnir relations are usually stated for partitions, they also hold for more general diagrams, though they usually do not generate all relations in $S^D$. We will prove that a generalized multi-column relation holds for general Specht modules below in Proposition~\ref{prop-relations}.

\section{Specht modules}

In this section, we will formulate a general notion of branching for diagrams. We will also construct multi-column analogues of the Garnir relations and show the existence of certain maps between Specht modules of different shapes.

\subsection{Special transversals}
We first define a property of a set of boxes that will allow it to play the role of the corner boxes for a general diagram. 
\begin{defn}
Given a diagram $D$, a subset $B \subset D$ is a \emph{special transversal} if the elements of $B$ can be ordered $b_1 = (i_1, j_1), b_2 = (i_2, j_2), \dots$ such that $(i_p, j_q) \not \in D$ for any $p<q$.
\end{defn}
In particular, no two boxes of a special transversal lie in the same row or column. Note that the corner boxes of a partition form a special transversal when ordered from bottom to top. The branching boxes in the rules given for northwest and forest diagrams in \cite{ReinerShimozono2} and \cite{Liu} also form special transversals.

Special transversals also have a graph-theoretic definition, as shown in \cite{Liu}.
\begin{prop} [\cite{Liu}] \label{prop-alternating}
	Let $B \subset D$, and let $M$ be the corresponding set of edges in $G=G(D)$. Then $B \subset D$ is a special transversal if and only if $M$ is a matching of $G$ (that is, no two edges of $M$ share a vertex), and $G$ contains no $M$-alternating cycle (that is, no cycle in $G$ has half of its edges in $M$).
\end{prop}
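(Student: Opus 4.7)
The plan is to reduce the proposition to showing that a certain auxiliary directed graph on $B$ is acyclic iff $G$ contains no $M$-alternating cycle. Write $B = \{b_1, \ldots, b_k\}$ with $b_s = (i_s, j_s)$, and note first that the requirement that no two boxes of $B$ lie in the same row or column translates directly into the statement that $M$ is a matching in $G$ (since boxes of $D$ correspond to edges and sharing a row or column means sharing a $v$- or $w$-vertex). So I just need to show that, assuming $M$ is a matching, the special transversal reordering condition is equivalent to $G$ having no $M$-alternating cycle.

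Define an auxiliary directed graph $H$ on vertex set $B$ by putting an arc $b_p \to b_q$ whenever $p \ne q$ and the ``crossing box'' $(i_p, j_q)$ lies in $D$. The definition of special transversal is then exactly the assertion that $B$ can be ordered so that all arcs of $H$ point from larger to smaller index; such a linear extension exists iff $H$ is acyclic. Thus the proposition reduces to showing that, given a matching $M$, the graph $H$ has a directed cycle iff $G$ has an $M$-alternating cycle.

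For one direction, suppose $H$ contains a directed cycle $b_{q_1} \to b_{q_2} \to \cdots \to b_{q_\ell} \to b_{q_1}$. Each arc $b_{q_s} \to b_{q_{s+1}}$ records that the edge $v_{i_{q_s}} w_{j_{q_{s+1}}}$ is in $G$. I then build the closed walk
\[ w_{j_{q_1}} \,-\, v_{i_{q_1}} \,-\, w_{j_{q_2}} \,-\, v_{i_{q_2}} \,-\, \cdots \,-\, w_{j_{q_\ell}} \,-\, v_{i_{q_\ell}} \,-\, w_{j_{q_1}} \]
in $G$, in which the edges $v_{i_{q_s}} w_{j_{q_s}}$ (the boxes $b_{q_s}$) are in $M$ and the connecting edges $v_{i_{q_s}} w_{j_{q_{s+1}}}$ (the crossing boxes) are not in $M$. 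Since $M$ is a matching, the indices $i_{q_s}$ are pairwise distinct and so are the $j_{q_s}$, so the walk has $2\ell$ distinct vertices and is an honest $M$-alternating cycle.

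For the converse, suppose $G$ has an $M$-alternating cycle. Being bipartite, it has even length, and I can list its $M$-edges cyclically as $b_{q_1}, b_{q_2}, \ldots, b_{q_\ell}$ in the order they are traversed. Between consecutive $M$-edges lies a single non-$M$ edge joining them in $G$; after fixing an orientation of the cycle, this non-$M$ edge is either $v_{i_{q_s}} w_{j_{q_{s+1}}}$ (corresponding to the box $(i_{q_s}, j_{q_{s+1}})$) for every $s$, or else $v_{i_{q_{s+1}}} w_{j_{q_s}}$ for every $s$ -- the form is forced to be consistent by the bipartite alternation around the cycle. Reversing the orientation if necessary, I may assume the former. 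This yields arcs $b_{q_1} \to b_{q_2} \to \cdots \to b_{q_\ell} \to b_{q_1}$ in $H$, i.e.\ a directed cycle.

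The main subtlety I anticipate is the bookkeeping in the converse direction: verifying that an $M$-alternating cycle in a bipartite graph really does force all its non-$M$ edges to be ``consistently oriented'' with respect to the matched pairs, so that they all correspond to boxes of the form $(i_{q_s}, j_{q_{s+1}})$ (rather than a mix of the two possibilities around the cycle). Once this is nailed down via the bipartite alternation of $v$- and $w$-vertices along the cycle, both directions fall out of the same correspondence and the proposition follows.
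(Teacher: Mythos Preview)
The paper does not supply its own proof of this proposition; it simply quotes the result from \cite{Liu}. So there is nothing in the present paper to compare your argument against.

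That said, your argument is correct and is essentially the standard one. A couple of small points worth tightening: in the forward direction, you should say explicitly why the crossing edges $v_{i_{q_s}} w_{j_{q_{s+1}}}$ are \emph{not} in $M$ (if $(i_{q_s},j_{q_{s+1}})$ were some $b_r$, then $i_r=i_{q_s}$ would force $r=q_s$ since $M$ is a matching, contradicting $j_r=j_{q_{s+1}}\neq j_{q_s}$). In the converse direction, your ``subtlety'' paragraph is right but can be made crisper: once you know $M$ is a matching, any cycle with half its edges in $M$ must have the $M$-edges pairwise nonadjacent (two adjacent $M$-edges would share a vertex), and hence strictly alternating; writing the cycle as $w_{c_1}v_{a_1}w_{c_2}v_{a_2}\cdots w_{c_\ell}v_{a_\ell}w_{c_1}$ with the edges $v_{a_s}w_{c_s}\in M$ then immediately gives the arcs $b_{q_s}\to b_{q_{s+1}}$ in $H$ from the non-$M$ edges $v_{a_s}w_{c_{s+1}}$. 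With those clarifications the proof is complete.
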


The reason that the condition for being a special transversal is a natural one is the following proposition. (Similar results appear in \cite{Kraskiewicz, Liu}, but we include the proof as it will be useful later.)

\begin{prop} \label{prop-filtration}
Let $B = \{b_1, b_2, \dots, b_k\}$ be a special transversal of a diagram $D$, and let $n=|D|$. For $i = 1, \dots, k$, let $V_i$ be the linear subspace of $S^D$ spanned by $e_T$, where $T$ contains the label $n$ in box $b_j$ for some $j \geq i$. Then there exists a filtration
\[0 = V_{k+1} \subset V_k \subset V_{k-1} \subset \dots \subset V_2 \subset V_1 \subset S^D\]
and surjective $\Sym_{n-1}$-homomorphisms $\varphi_i\colon V_i \to S^{D \backslash \{b_i\}}$ such that $V_{i+1} \subset \ker \varphi_i$ for $i=1, \dots, k$.
\end{prop}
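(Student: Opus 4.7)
The plan is to construct $\varphi_i$ as the restriction to $V_i$ of a projection-style linear map on the group algebra, with the crucial input being the special transversal condition, which prevents the label $n$ from being carried into box $b_i$ by the symmetrizer action when $T$ has $n$ in some $b_j$ with $j>i$.

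First, I define $\phi_i\colon \CC[\Sym_n]\to \CC[\Sym_{n-1}]$ on injective tableaux by $\phi_i(T)=T|_{D\backslash\{b_i\}}$ if $T(b_i)=n$ and $\phi_i(T)=0$ otherwise; here $T|_{D\backslash\{b_i\}}$ is an injective tableau of shape $D\backslash\{b_i\}$ with labels in $\{1,\dots,n-1\}$, hence an element of $\CC[\Sym_{n-1}]$. Because left multiplication by $\Sym_{n-1}$ only permutes $1,\dots,n-1$ and leaves $n$ fixed, $\phi_i$ is $\Sym_{n-1}$-equivariant.

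Next, I compute $\phi_i(e_T)$ for $T$ with $T(b_j)=n$, $j\geq i$. Expanding $e_T=\sum_{\sigma\in C_D,\,\tau\in R_D}\sgn(\sigma)\,T\sigma\tau$, the position of the label $n$ in the summand $T\sigma\tau$ is obtained from $b_j$ by moving within column $j_j$ (via $\sigma$) and then within the resulting row (via $\tau$). For this final position to equal $b_i=(i_i,j_i)$, one must have $(i_i,j_j)\in D$. The key use of the special transversal condition now enters: if $j>i$, then setting $p=i$ and $q=j$ gives $p<q$ and hence $(i_p,j_q)=(i_i,j_j)\notin D$, so no summands survive and $\phi_i(e_T)=0$. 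This shows $V_{i+1}\subset\ker\phi_i$. When $j=i$, the analogous analysis instead forces $\sigma(b_i)=\tau(b_i)=b_i$, so $\sigma\in C_{D\backslash\{b_i\}}$ and $\tau\in R_{D\backslash\{b_i\}}$, and the remaining sum reassembles as $e_{T|_{D\backslash\{b_i\}}}\in S^{D\backslash\{b_i\}}$.

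Setting $\varphi_i := \phi_i|_{V_i}$ yields a $\Sym_{n-1}$-equivariant map $V_i\to S^{D\backslash\{b_i\}}$ with $V_{i+1}\subset\ker\varphi_i$, and surjectivity is immediate since every $e_{T'}\in S^{D\backslash\{b_i\}}$ arises as $\varphi_i(e_T)$ for $T$ extending $T'$ by placing $n$ at $b_i$. The bulk of the work is the bookkeeping in the middle step; the genuinely non-routine ingredient is the single appeal to the special transversal hypothesis, and I expect the conventions for how $C_D$ and $R_D$ act on the boxes to require the most care during the explicit computation.
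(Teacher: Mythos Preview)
Your proof is correct and follows essentially the same approach as the paper's: define $\phi_i$ on all of $\CC[\Sym_n]$ by deleting the box $b_i$ when it carries the label $n$, trace where $n$ lands under $\sigma\in C_D$ and $\tau\in R_D$, and invoke the special transversal condition $(i_p,j_q)\notin D$ for $p<q$ to kill all contributions when $j>i$ and to force $\sigma,\tau$ to fix $b_i$ when $j=i$. The only cosmetic difference is that the paper normalizes $b_i=(i,i)$ at the outset, whereas you keep the general coordinates $(i_i,j_i)$; the arguments are otherwise identical.
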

\begin{proof}
We assume without loss of generality that $b_i = (i,i)$.

For any injective tableau $T$ of shape $D$, define $\varphi_i(T)$ to be the tableau obtained by removing box $b_i=(i,i)$ from $T$ if $T_{ii} = n$, otherwise let $\varphi_i(T)=0$. Then $\varphi_i$ defines a $\Sym_{n-1}$-module homomorphism from $\CC[\Sym_n]$, spanned by tableaux of shape $D$, to $\CC[\Sym_{n-1}]$, spanned by tableaux of shape $D \backslash \{b_i\}$.

Suppose $T$ is an injective tableau with $T_{jj} = n$ for some $j>i$. Any tableau of the form $T\sigma$ for $\sigma \in C_D$ contains $n$ in column $j$, so by the definition of a special transversal, it cannot have $n$ in row $i$. Hence $e_T = T \ov C(D) R(D)$ cannot have any term with $n$ in box $b_i$. Thus $V_{i+1} \subset \ker \varphi_i$.

Now $\varphi_i(V_i)$ is spanned by the $\varphi_i(e_T)$, where $T_{ii}=n$. The only way that $(T \sigma \pi)_{ii}=n$ for $\sigma \in C_D$, $\pi \in R_D$ is if $\sigma$ and $\pi$ both fix box $b_i$. In other words, we must have $\sigma \in C_{D \backslash \{b_i\}}$ and $\pi \in R_{D \backslash \{b_i\}}$. Thus
\[\varphi_i(e_T) = \varphi_i(T) \ov C(D \backslash \{b_i\}) R(D \backslash \{b_i\}) \in S^{D \backslash \{b_i\}}.\]
Since any injective tableau of shape $D \backslash \{b_i\}$ is of the form $\varphi_i(T)$ for some such $T$, it follows that $\varphi_i \colon V_i \to S^{D \backslash \{b_i\}}$ is surjective.
\end{proof}

\begin{cor} \label{cor-subrep}
Let $B$ be a special transversal of a diagram $D$, and let $n = |D|$. Then $\Res^{\Sym_n}_{\Sym_{n-1}} S^D$ contains an $\Sym_{n-1}$-submodule isomorphic to $\bigoplus_{i=1}^k S^{D \backslash \{b_i\}}$. Using the notation of Proposition~\ref{prop-filtration}, equality holds if and only if $V_1 = S^D$ and $V_{i+1} = \ker \varphi_i$ for all $i$. 
\end{cor}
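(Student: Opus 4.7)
The plan is to extract isomorphism-type information from the filtration in Proposition~\ref{prop-filtration} using the fact that $\CC[\Sym_{n-1}]$ is semisimple (Maschke's theorem), so that every short exact sequence of $\Sym_{n-1}$-modules splits and every quotient is also realized as a submodule.

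First, I would observe that, since $V_{i+1} \subset \ker \varphi_i$, the surjection $\varphi_i\colon V_i \to S^{D \backslash \{b_i\}}$ descends to a surjective $\Sym_{n-1}$-homomorphism $\overline{\varphi}_i\colon V_i/V_{i+1} \twoheadrightarrow S^{D \backslash \{b_i\}}$. By semisimplicity, each short exact sequence $0 \to V_{i+1} \to V_i \to V_i/V_{i+1} \to 0$ splits; iterating downward from $V_{k+1}=0$, we obtain a $\Sym_{n-1}$-module isomorphism
\[V_1 \;\cong\; \bigoplus_{i=1}^k V_i/V_{i+1}.\]
Applying Maschke once more to each $\overline{\varphi}_i$ gives $V_i/V_{i+1} \cong S^{D \backslash \{b_i\}} \oplus K_i$, where $K_i = \ker \overline{\varphi}_i$. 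Combining these, $V_1 \cong \bigoplus_i S^{D \backslash \{b_i\}} \oplus \bigoplus_i K_i$, which in particular realizes $\bigoplus_i S^{D \backslash \{b_i\}}$ as an $\Sym_{n-1}$-submodule of $V_1 \subset \Res^{\Sym_n}_{\Sym_{n-1}} S^D$, proving the first assertion.

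For the equivalence, I would track when the two inclusions $\bigoplus_i S^{D \backslash \{b_i\}} \hookrightarrow V_1 \hookrightarrow S^D$ are equalities. The outer inclusion is an equality precisely when $V_1 = S^D$. The inner inclusion is an equality precisely when $K_i = 0$ for every $i$, i.e., when each $\overline{\varphi}_i$ is an isomorphism, which in turn is equivalent to $V_{i+1} = \ker \varphi_i$ for every $i$. Hence equality of the direct sum with $\Res^{\Sym_n}_{\Sym_{n-1}} S^D$ holds if and only if both conditions are satisfied simultaneously.

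There is no real obstacle here: once Proposition~\ref{prop-filtration} is in hand, the argument is purely formal, relying only on semisimplicity to convert the filtration with its partial surjections into a direct-sum decomposition and to match dimensions/isomorphism types on both sides.
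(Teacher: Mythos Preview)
Your argument is correct and is essentially a fleshed-out version of the paper's own proof, which simply says to take the successive quotients of the filtration from Proposition~\ref{prop-filtration} and invoke complete reducibility. The only difference is level of detail: you spell out explicitly how semisimplicity turns the filtration into a direct sum and how the kernels $K_i$ and the quotient $S^D/V_1$ control the equality case, whereas the paper leaves this to the reader.
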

\begin{proof}
Take the successive quotients in the filtration in Proposition~\ref{prop-filtration} (and use complete reducibility of representations over finite groups).
\end{proof}

For this reason, we make the following definition.
\begin{defn}
A diagram $D$ \emph{branches} with respect to a subset $B \subset D$ if $B$ is a special transversal of $D$, and
\[\Res^{\Sym_n}_{\Sym_{n-1}} S^D \cong \bigoplus_{x \in B} S^{D \backslash \{x\}}. \tag{$*$}\] We call $B$ a \emph{branching set} of $D$.

We say that $D$ \emph{branches completely} with respect to $B$ if $B$ is a branching set of $D$, and $D \backslash \{b\}$ branches completely for all $b \in B$. 
\end{defn}

By Theorems~\ref{thm-northwest} and \ref{thm-forest}, all northwest diagrams and forest diagrams branch completely. Note that the definition of complete branching does not stipulate any relationship between the branching sets for $D$ and $D \backslash \{b\}$.

\begin{rmk} \label{rmk-branching}
\begin{enumerate}[(a)]
\item It is possible for \eqref{eq-branch} to hold for a set $B$ that is not a special transversal. For instance, let \[D = \ydiagram{1+1,2,1}*[*(lightgray)]{0,1,1}\,.\] Then \eqref{eq-branch} holds if $B$ is the set of shaded boxes. However, we will not consider such possibilities here because one does not have the algebraic justification of a filtration as in Proposition~\ref{prop-filtration}. \medskip

\item Not all diagrams have a branching set. For instance, let
\[D = \ydiagram{1+2,2,1}*{0,0,2+1}\,.\]
A straightforward calculation shows that $S^D \cong S^{33} \oplus 2 \cdot S^{321} \oplus S^{222}$, so \[\Res^{\Sym_6}_{\Sym_5} S^D \cong 3 \cdot S^{32} \oplus 2 \cdot S^{311} \oplus 3 \cdot S^{221}.\]
However, for any box $x \in D$, $S^{D \backslash x} = S^{32} \oplus S^{311} \oplus S^{221}$. Hence no branching rule can exist for $D$.
\medskip

\item Not every branching diagram branches completely with respect to some branching set. For instance, let
\[D = \ydiagram{2+3,0,1,1,1}*{0,0,3+2,2+2,2+1}*{0,0,0,0,4+1}*[*(lightgray)]{1}\,.\]

By Proposition~\ref{prop-onebox} below, $D$ branches with respect to the shaded box; however, by Proposition~\ref{prop-subdiagram} below, $D$ does not branch completely since it contains as an induced subdigram the nonbranching example from (b) above.
\medskip

\item Not every diagram that branches completely does so with respect to all of its branching sets. Take
\[D = \yts{\none,\none xz, \none y}*{1+2,3,2}\,.\]
Both $B_1 = \{x\}$ and $B_2 = \{y,z\}$ are branching sets of $D$. However, $D \backslash \{x\}$ does not branch---it is equivalent to the diagram in (b) above---while $D \backslash \{y\}$ and $D \backslash \{z\}$ both branch completely. Hence $D$ branches completely with respect to $B_2$ but not with respect to $B_1$.
\end{enumerate}
\end{rmk}

Proposition~\ref{prop-filtration} also implicitly shows how to construct a basis for $S^D$, assuming one knows that $D$ branches with respect to $B$ and one has a basis for $D \backslash \{x\}$ for each $x \in B$. Indeed, for each $b_i \in B$, let $U_i$ be a set of tableaux of shape $D \backslash \{b_i\}$ such that $\{e_T \mid T \in U_i\}$ forms a basis of $S^{D \backslash \{b_i\}}$. Then $S^D$ has a basis $\{e_T\}$, where $T$ ranges over all tableaux obtained by adding box $b_i$ with label $n$ to some element of $U_i$ for all $b_i \in B$.

In the case that $D$ branches completely with respect to $B$, one can inductively construct this basis, which yields the following proposition.
\begin{prop} \label{prop-basis}
	Choose for each completely branching diagram $D$ a set $B(D)$ with respect to which it branches completely. For each chain $\mathcal C$ of completely branching diagrams \[\varnothing = D^{(0)} \subset D^{(1)} \subset \cdots \subset D^{(n)} = D\] such that $D^{(j)} \backslash D^{(j-1)} = \{x_j\}$ is a single box in $B(D^{(j)})$, let $T_{\mathcal C}$ be the tableau of shape $D$ containing label $j$ in box $x_j$. Then $\{e_{T_{\mathcal C}}\}$ is a basis for $S^D$.
\end{prop}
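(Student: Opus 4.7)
The plan is to induct on $n = |D|$. The base case $n = 0$ is immediate, since $S^\varnothing \cong \CC$ is one-dimensional and there is a unique (empty) chain. For the inductive step, assume the claim for all completely branching diagrams of size less than $n$. Let $B(D) = \{b_1, b_2, \dots, b_k\}$, ordered so as to satisfy the special-transversal condition.

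Next I would feed this ordering into Proposition~\ref{prop-filtration} to obtain the filtration $0 = V_{k+1} \subset V_k \subset \cdots \subset V_1 \subset S^D$ and the surjections $\varphi_i \colon V_i \to S^{D\backslash\{b_i\}}$ with $V_{i+1} \subset \ker \varphi_i$. Because $D$ branches with respect to $B(D)$, Corollary~\ref{cor-subrep} promotes each such inclusion to an equality: $V_1 = S^D$ and $V_{i+1} = \ker \varphi_i$. In particular, $\varphi_i$ descends to an $\Sym_{n-1}$-isomorphism $V_i/V_{i+1} \cong S^{D\backslash\{b_i\}}$.

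I would then partition the chains $\mathcal C$ of $D$ by their last box $x_n = b_i$. Removing the final step of such a $\mathcal C$ gives a chain $\mathcal C'$ of completely branching subdiagrams terminating at $D\backslash\{b_i\}$ (using the chosen branching set $B(D\backslash\{b_i\})$), and this is a bijection. Because $(T_{\mathcal C})_{b_i}=n$, the explicit formula for $\varphi_i$ from the proof of Proposition~\ref{prop-filtration} yields $\varphi_i(e_{T_{\mathcal C}}) = e_{T_{\mathcal C'}}$. By induction, $\{e_{T_{\mathcal C'}}\}$ is a basis of $S^{D\backslash\{b_i\}}$, so under the isomorphism $V_i/V_{i+1} \cong S^{D\backslash\{b_i\}}$ the classes $\{e_{T_{\mathcal C}} + V_{i+1} : x_n = b_i\}$ form a basis of $V_i/V_{i+1}$.

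To conclude, I would combine these bases across $i$: since each $e_{T_{\mathcal C}}$ with $x_n=b_i$ lies in $V_i$ and its image is one of the chosen basis vectors of $V_i/V_{i+1}$, the standard argument that compatible lifts through the successive quotients of a finite filtration assemble into a basis of the top term shows that $\bigcup_i \{e_{T_{\mathcal C}} : x_n = b_i\}$ is a basis of $V_1 = S^D$. The only real thing to verify, beyond the routine filtration bookkeeping, is the identity $\varphi_i(e_{T_{\mathcal C}}) = e_{T_{\mathcal C'}}$; I do not expect any serious obstacle, since it is essentially read off from the proof of Proposition~\ref{prop-filtration}, but it is the step that genuinely ties the induction together.
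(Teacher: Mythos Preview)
Your proposal is correct and follows essentially the same approach as the paper: induct on $n$, use the filtration of Proposition~\ref{prop-filtration} together with the equalities $V_1=S^D$ and $\ker\varphi_i=V_{i+1}$ granted by branching (Corollary~\ref{cor-subrep}), compute $\varphi_i(e_{T_{\mathcal C}})=e_{T_{\mathcal C'}}$, apply the inductive hypothesis to each $D\backslash\{b_i\}$, and lift the resulting bases through the filtration. The paper's proof is simply a compressed version of exactly this argument.
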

\begin{proof}
	If $T_{\mathcal C}$ has $n$ in box $b_i \in B(D)$, then by the proof of Proposition~\ref{prop-filtration}, $\varphi_i(e_{T_{\mathcal C}}) = e_{\varphi_i(T_{\mathcal C})}$, where $\varphi_i(T_{\mathcal C})$ is the tableau of shape $D \backslash \{b_i\}$ corresponding to the chain $\mathcal C$ with $D$ removed from the end. Since these images form a basis of $\bigoplus_i S^{D\backslash \{b_i\}}$ (by induction), which is isomorphic to $\bigoplus_i V_i/V_{i+1}$ (since $D$ branches), we have that $\{e_{T_{\mathcal C}}\}$ gives a basis of $S^D$, as desired.
\end{proof}

 Another way of phrasing this is that the restriction of $S^D$ from $\Sym_n$ to $\Sym_{n-1}$, then from $\Sym_{n-1}$ to $\Sym_{n-2}$, and so on, splits $S^D$ into 1-dimensional subspaces, each spanned by some $e_{T_{\mathcal C}}$. (This construction is essentially equivalent to the construction of standard Young tableaux or, for instance, injective balanced labelings for permutation diagrams \cite{EdelmanGreene, FGRS}.)

\subsection{Generalized Garnir relations}
In order to describe the relations for completely branching diagrams, we need to introduce a generalized version of the standard Garnir relations.

For any tableau $A$ of shape $D$ (not necessarily injective), let $\Stab_A \subset \Sym_n$ be the subgroup that stabilizes $A$ (with respect to right action), and define elements of $\CC[\Sym_n]$
\begin{align*}
\Stab(A) &= \sum_{\pi \in \Stab_A} \pi,&
\ov \Stab(A) &= \sum_{\pi \in \Stab_A} \sgn(\pi) \cdot \pi.
\end{align*}

\begin{prop} \label{prop-relations}
Let $A$ be a tableau of shape $D$ such that $A \cdot \sigma$ is not row-strict for any $\sigma \in C_D$. Then
\[\ov \Stab(A) \ov C(D) R(D) = 0,\]
so for any injective tableau $T$,
\[\sum_{\pi \in \Stab_A} \sgn(\pi) \cdot e_{T\pi} = 0.\]
\end{prop}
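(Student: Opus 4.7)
The plan is to prove the identity $\overline{\Stab}(A)\,\overline{C}(D)\,R(D) = 0$, from which the stated relation on tableaux follows by left-multiplying by $T$ and recognizing that $\sum_{\pi\in \Stab_A}\sgn(\pi)\,e_{T\pi} = T\cdot\overline{\Stab}(A)\,\overline{C}(D)\,R(D)$. Expanding $\overline{C}(D) = \sum_{\sigma\in C_D}\sgn(\sigma)\sigma$, it suffices to prove the stronger pointwise statement $\overline{\Stab}(A)\,\sigma\,R(D) = 0$ for each individual $\sigma \in C_D$.

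To handle a fixed $\sigma$, I use the hypothesis that $A\sigma$ is not row-strict to pick two distinct boxes $x, y$ lying in some common row of $D$ with $(A\sigma)_x = (A\sigma)_y$, and set $\tau := (x\;y)$. Then $\tau \in R_D$, since it permutes only within a single row. Because $\tau$ swaps two boxes of $A\sigma$ carrying the same label, $A\sigma\tau = A\sigma$; equivalently, the element $\pi := \sigma\tau\sigma^{-1}$ lies in $\Stab_A$. Since $\pi$ is a nontrivial transposition, $\sgn(\pi) = -1$, so $\overline{\Stab}(A)\,\pi = -\overline{\Stab}(A)$, and combining with the identity $\pi\sigma = \sigma\tau$ gives the key sign-flip
\[
\overline{\Stab}(A)\,\sigma\,\tau \;=\; \overline{\Stab}(A)\,\pi\,\sigma \;=\; -\overline{\Stab}(A)\,\sigma.
\]

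To conclude, I partition $R_D$ into left cosets of $\langle\tau\rangle$, each of which has the form $\{\rho, \tau\rho\}$ since $\tau$ has order $2$. The sign-flip above then gives
\[
\overline{\Stab}(A)\,\sigma(\rho + \tau\rho) \;=\; \overline{\Stab}(A)\,\sigma\,(1 + \tau)\,\rho \;=\; 0
\]
for each coset representative $\rho$, and summing over representatives yields $\overline{\Stab}(A)\,\sigma\,R(D) = 0$, as required.

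The main bookkeeping subtlety I anticipate is keeping the right-action conventions straight: because right multiplication by $\sigma$ acts on boxes rather than on labels, one has $\Stab_{A\sigma} = \sigma^{-1}\Stab_A\,\sigma$, which is precisely what places $\sigma\tau\sigma^{-1}$ (rather than $\sigma^{-1}\tau\sigma$) inside $\Stab_A$ and makes the sign-flip work. Once this is set up correctly, the argument is a direct generalization of the classical sign-reversing involution proof of the two-column Garnir relations, with $\Stab_A$ playing the role of the column-group supported on a two-column Garnir block.
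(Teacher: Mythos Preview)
Your proof is correct and follows essentially the same argument as the paper's. The paper phrases the key step as ``$(\id-\sigma\tau\sigma^{-1})$ is a right factor of $\overline{\Stab}(A)$ and $(\id+\tau)$ is a left factor of $R(D)$,'' then observes $(\id-\sigma\tau\sigma^{-1})\sigma(\id+\tau)=\sigma(\id-\tau)(\id+\tau)=0$; your explicit coset decomposition of $R_D$ over $\langle\tau\rangle$ is just an unpacking of the ``left factor'' statement.
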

\begin{proof}
By the condition on $A$, for any $\sigma \in C_D$, there exists a transposition $\tau \in R_D$ that stabilizes $A \cdot \sigma$, so $\sigma \tau \sigma^{-1}$ is a transposition in $\Stab_A$. Then $(\id - \sigma\tau\sigma^{-1})$ is a right factor of $\ov \Stab(A)$ and $(\id + \tau)$ is a left factor of $R(D)$, so since
\[(\id - \sigma\tau\sigma^{-1}) \cdot \sigma \cdot (\id + \tau) = \sigma \cdot(\id - \tau)(\id + \tau) = 0,\]
it follows that $\ov \Stab(A) \cdot \sigma \cdot R(D) = 0$. Summing over all $\sigma \in C_D$ gives the result.
\end{proof}

Note that the usual Garnir relations follow as a special case of Proposition~\ref{prop-relations}: pick two columns of $D$ that intersect fewer than $a$ rows, and let $A$ be a tableau with $a$ boxes in these two columns labeled 1 and all other boxes labeled distinctly. Then $A$ will satisfy the condition in Proposition~\ref{prop-relations} by the pigeonhole principle, and the Garnir relations follow.

It will turn out that Proposition~\ref{prop-relations} suffices to describe all the relations in any Specht module that branches completely. We will identify a special subclass of these relations that will also suffice, which we refer to as the \emph{generalized Garnir relations}. These will be the analogues of the two-column Garnir relations for multiple columns.


\begin{defn}
For $m \geq 1$, choose distinct columns $j_1, j_2, \dots, j_{m+1}$ of the diagram $D$, and pick $m$ disjoint sets of boxes $A = (A_1, A_2, \dots, A_m)$ satisfying:
\begin{itemize}
\item for $i=1, \dots, m$, $A_i$ is a subset of columns $j_i$ and $j_{i+1}$; and
\item $\sum_{i=1}^m |A_i| > c_{j_1} + \dots + c_{j_{m+1}} - c$, where $c_j$ is the number of boxes in column $j$, and $c$ is the number of rows with boxes in all of the columns $j_1, \dots, j_{m+1}$.
\end{itemize}

Let $\Stab_A$ be the subgroup of $\Sym_n$ that stabilizes each $A_i$, and define $\ov \Stab(A)$ as before. Then a \emph{(generalized) Garnir relation} with respect to $A$ is a relation of the form \[T \cdot \ov \Stab(A) \ov C(D) R(D) = 0.\] 
\end{defn}
(Note that each column $j_1, \dots, j_{m+1}$ must contain at least one box of $A$ for the second condition to hold.)

To see that this is indeed a relation, consider the tableau $\widetilde{A}$ of shape $D$ in which the boxes in $A_i$ are labeled $i$, and all other boxes are labeled distinctly. Then $\widetilde{A}$ satisfies the condition of Proposition~\ref{prop-relations}: by the pigeonhole principle, any tableau of the form $\widetilde{A}\sigma$ for $\sigma \in C_D$ must contain some row with $m+1$ boxes using labels $1, \dots, m$, so it cannot be row-strict. We therefore get a relation of $S^D$ by applying Proposition~\ref{prop-relations}.

\begin{ex}
Let $\widetilde{A}$ be the following tableau of shape $D$:
\[\yts{1234,1233,1,\none 1,\none\none 2, \none\none\none 3}
*[*(red)]{1,1,1,1+1} *[*(green)]{1+1,1+1,0,0,2+1} *[*(blue)]{2+1,2+2,0,0,0,3+1}.\]
Let $A_i$ be the set of boxes labeled $i$ for $i=1,2,3$. These $A_i$ satisfy the conditions listed above, so $\ov \Stab(A) \ov C(D) R(D)$ is a four-column Garnir relation.
\end{ex}

\begin{rmk}
	Note that as defined, given $A=(A_1, A_2, \dots, A_m)$ yielding a Garnir relation, it is possible that some subsequence $A' = (A_i, A_{i+1}, \dots, A_j)$ also yields a Garnir relation. In this case, since $\ov\Stab(A)$ is a multiple of $\ov\Stab(A')$, the latter relation implies the former. Therefore one really only needs to consider \emph{minimal} Garnir relations, that is, ones in which $A$ contains no such subsequence. However, it will be useful for us to allow nonminimal Garnir relations in our discussion below, so we will usually not make this distinction.
\end{rmk}


We will need the following result about how one can rearrange terms in a generalized Garnir relations.
\begin{prop} \label{prop-move}
	Let $D$ be a diagram with $n$ boxes with a generalized Garnir relation with respect to $A=(A_1, \dots, A_m)$. Choose $x \in A_p$, and write $A_x = (A_1, \dots, A_p \backslash \{x\}, \dots, A_m)$. Then for any $y \in A_q$, there exists a generalized Garnir relation with respect to $A' = (A_1', \dots, A_m')$ with $\bigcup A_i = \bigcup A_i'$ such that $\ov \Stab(A_x)\ov C(D)R(D)$ is a scalar multiple of $\pi \cdot \ov \Stab(A'_y)\ov C(D)R(D)$ for some $\pi \in \Sym_n$ with $\pi(y)=x$.
\end{prop}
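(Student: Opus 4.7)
The plan is to construct $A'$ and $\pi$ explicitly by case analysis on whether $x$ and $y$ share a column of $D$. The main tools are the coset decomposition $\Sym_{A_p} = \bigsqcup_{x'\in A_p}(x\,x')\Sym_{A_p\setminus\{x\}}$, which yields $\ov\Stab(A) = \sum_{x'\in A_p}\sgn((x\,x'))(x\,x')\ov\Stab(A_x)$, combined with Proposition~\ref{prop-relations}. Conjugating, one also obtains $(x\,x')\ov\Stab(A_x) = \ov\Stab(A_{x'})(x\,x')$, so the Garnir relation can be rewritten as $\ov\Stab(A_x)\ov C(D)R(D) = \sum_{x'\in A_p\setminus\{x\}}\ov\Stab(A_{x'})(x\,x')\ov C(D)R(D)$.

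The main case is when $x$ and $y$ lie in a common column of $D$. Take $\pi = (x\,y)\in C_D$, and define $A'$ by swapping the component-memberships of $x$ and $y$: set $A'_p = (A_p\setminus\{x\})\cup\{y\}$, $A'_q = (A_q\setminus\{y\})\cup\{x\}$, and $A'_i = A_i$ otherwise (taking $A' = A$ when $p = q$). Since $x$ and $y$ share a column, both new components still lie in their respective two-column strips, and the column sequence, sizes, and total distribution of boxes per column are all preserved, so $A'$ remains a valid generalized Garnir tuple with $\bigcup A'_i = \bigcup A_i$. A direct conjugation computation shows $\pi\,\Stab_{A'_y}\,\pi = \Stab_{A_x}$, giving $\ov\Stab(A_x) = \pi\ov\Stab(A'_y)\pi$; combined with $\pi\ov C(D) = -\ov C(D)$, this yields $\ov\Stab(A_x)\ov C(D)R(D) = -\pi\ov\Stab(A'_y)\ov C(D)R(D)$, as desired.

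When $x$ and $y$ lie in different columns, the plan is to reduce to the same-column case through a chain of intermediate modifications. Using the rewritten Garnir identity above, one expands $\ov\Stab(A_x)\ov C(D)R(D)$ as a sum over $x'\in A_p\setminus\{x\}$; terms corresponding to $x'$ sharing a column with $x$ collapse cleanly (since $(x\,x')\in C_D$ contributes a sign). Applying the same-column construction recursively to a sequence of intermediate tuples $A^{(i)}$ with removed boxes $z_i$ chosen in columns progressively closer to that of $y$, and absorbing any extraneous terms via further Garnir relations from Proposition~\ref{prop-relations}, one obtains a composite permutation $\pi = \pi_k\cdots\pi_1$ with $\pi(y) = x$. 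The final scalar is the product of the individual scalars at each step.

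The main obstacle is the general (cross-column) case: one must show that the chain of intermediate tuples $A^{(i)}$ can be constructed so that each is a valid generalized Garnir relation with the same underlying box set, and that the repeated application of Proposition~\ref{prop-relations} leaves only a single scalar multiple rather than a surviving linear combination. The key technical point is that the adjacency structure of the column sequence $j_1, \ldots, j_{m+1}$ together with the size inequality $\sum|A_i| > c_{j_1}+\cdots+c_{j_{m+1}}-c$ must supply enough flexibility to route a chain of same-column transpositions from $x$ to $y$, with compatible rearrangements of the partition at each step.
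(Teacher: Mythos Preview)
Your same-column case is fine and matches the paper. The gap is in the different-column case, and you have essentially flagged it yourself in your last paragraph: after writing
\[
\ov\Stab(A_x)\ov C(D)R(D)=\sum_{x'\in A_p\setminus\{x\}}\ov\Stab(A_{x'})(x\,x')\ov C(D)R(D),
\]
you have $|A_p|-1$ terms, and left-multiplying by various $(x\,x')$ or $(y\,x')$ does not turn this sum into a \emph{scalar} multiple of a single $\pi\,\ov\Stab(A'_y)\ov C(D)R(D)$. Concretely, for two boxes $x',x''$ in the column of $y$ one gets
\[
\ov\Stab(A_{x''})(x\,x'')\ov C(D)R(D)=-(y\,x'')\cdot\ov\Stab(A_{y})(x\,y)\ov C(D)R(D),
\]
so the contributions differ by genuine left group-algebra factors, not scalars. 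There is no further Garnir relation from Proposition~\ref{prop-relations} that collapses these, because the ``extra'' permutations $(y\,x'')$ act on the left, outside the reach of right multiplication by $\ov C(D)R(D)$.

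The missing idea is to work not with the full transversal $\{(x\,x'):x'\in A_p\}$ of $\Stab_{A_p}/\Stab_{A_p\setminus\{x\}}$ but with the much coarser decomposition modulo $\Stab_A\cap C_D$. In the paper, one passes via Remark~\ref{rmk-mcr} to coset representatives $Z$ for $\Stab_A\cap C_D$ in $\Stab_A$ and observes that these split into exactly two blocks, according to whether the representative keeps $x$ in its own column or moves it to the other column; one may choose the representatives so that $Z=Z_x\sqcup (x\,y)Z_y$. The vanishing of the Garnir relation then reads directly as
\[
\sum_{\sigma\in Z_x}\sgn(\sigma)\,\sigma\,\ov C(D)R(D)\;=\;(x\,y)\sum_{\sigma\in Z_y}\sgn(\sigma)\,\sigma\,\ov C(D)R(D),
\]
which, again by Remark~\ref{rmk-mcr}, is exactly the desired scalar relation between $\ov\Stab(A_x)\ov C(D)R(D)$ and $(x\,y)\,\ov\Stab(A_y)\ov C(D)R(D)$ with $A'=A$. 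Once this $p=q$, different-column step is in hand, your chaining scheme (together with the trivial reassignment of a box in column $j_{p+1}$ from $A_p$ to $A_{p+1}$) goes through without further difficulty.
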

\begin{proof}
	Note that if $x$ and $y$ lie in the same column and $p=q$, we can just take $\pi$ to be the transposition switching $x$ and $y$ and $A' = \pi(A)$. Moreover, if $x=y$ and $q=p+1$, we can let
	\[A' = (A_1, \dots, A_p\backslash \{x\}, A_{p+1} \cup \{x\}, \dots, A_m),\]
	and the result will trivially hold since $A_x = A'_x$.
	
	We first prove the case when $p=q$, but $x$ and $y$ lie in different columns. Since $A$ is a Garnir relation, $\ov\Stab(A)C(D)R(D)=0$. If $Z$ is any set of left coset representatives for $\Stab_A\cap C_D$ in $\Stab_A$, then by Remark~\ref{rmk-mcr},
	\[\sum_{\sigma \in Z} {\sgn(\sigma)\cdot \sigma \ov C(D) R(D)} = 0.\]
	Let $Z_x$ be a set of left coset representatives for $\Stab_{A_x} \cap C_D$ in $\Stab_{A_x}$, and similarly define $Z_y$. Then we can take $Z = Z_x \cup (\pi \cdot Z_y)$, where $\pi$ is the transposition switching $x$ and $y$. Therefore
	\[\sum_{\sigma \in Z_x} {\sgn(\sigma)\cdot \sigma \ov C(D) R(D)} = \pi \sum_{\sigma \in Z_y} {\sgn(\sigma)\cdot \sigma \ov C(D) R(D)}.\]
	Using Remark~\ref{rmk-mcr} again proves the result for $A'=A$. Note that this argument also proves that if $A_p$ lies entirely within one column, then $\ov \Stab(A_x)\ov C(D)R(D) = 0$ (since $Z_y$ will be empty).
	
	The general case follows from applying the above cases repeatedly. Assume without loss of generality that $A_i$ is contained in columns $i$ and $i+1$ and that $p<q$. Choose boxes $z_{p} \in A_p, \dots, z_{q-1} \in A_{q-1}$ such that $z_i$ lies in column $i+1$. Then by applying the result for $x, z_{p} \in A_p$, then moving $z_{p}$ from $A_p$ to $A'_{p+1}$ as in the first paragraph above, then applying the result for $z_{p}, z_{p+1} \in A'_{p+1}$, and so forth, we arrive at the desired result for $x$ and $y$. (If one cannot choose such boxes, then $\ov \Stab(A_x)\ov C(D)R(D)$ will vanish as at the end of the last paragraph.) 
\end{proof}

\begin{ex} \label{ex-garnir}
	We illustrate Proposition~\ref{prop-move}. In the tableaux below, if we first think of $\bullet$ as one of the lowercase letters, the corresponding Garnir relation is (writing a tableau $T$ instead of $e_T$ and using Remark~\ref{rmk-mcr} for ease of notation):
	\[\yts{\bullet aC,b,\none D,\none\none E} *[*(orange)]{1}*[*(red)]{1+1,1}*[*(yellow)]{2+1,0,1+1,2+1}
	\,-\, \yts{\bullet aD,b,\none C,\none\none E}  *[*(orange)]{1}*[*(red)]{1+1,1}*[*(yellow)]{2+1,0,1+1,2+1}
	\,-\, \yts{\bullet aC,b,\none E,\none\none D}  *[*(orange)]{1}*[*(red)]{1+1,1}*[*(yellow)]{2+1,0,1+1,2+1}
	\,-\, \yts{\bullet bC,a,\none D,\none\none E}  *[*(orange)]{1}*[*(red)]{1+1,1}*[*(yellow)]{2+1,0,1+1,2+1}
	\,+\, \yts{\bullet bD,a,\none C,\none\none E}  *[*(orange)]{1}*[*(red)]{1+1,1}*[*(yellow)]{2+1,0,1+1,2+1}
	\,+\, \yts{\bullet bC,a,\none E,\none\none D}  *[*(orange)]{1}*[*(red)]{1+1,1}*[*(yellow)]{2+1,0,1+1,2+1}\]
	\[=\, \yts{a\bullet C,b,\none D,\none\none E}  *[*(orange)]{1+1}*[*(red)]{1,1}*[*(yellow)]{2+1,0,1+1,2+1}
	\,-\, \yts{a\bullet D,b,\none C,\none\none E}  *[*(orange)]{1+1}*[*(red)]{1,1}*[*(yellow)]{2+1,0,1+1,2+1}
	\,-\, \yts{a\bullet C,b,\none E,\none\none D}  *[*(orange)]{1+1}*[*(red)]{1,1}*[*(yellow)]{2+1,0,1+1,2+1},\]
	and now thinking of $\bullet$ as one of the uppercase letters, we get that this also equals:
	\[=\, \yts{aC\bullet,b,\none D,\none\none E}  *[*(orange)]{2+1}*[*(red)]{1,1}*[*(yellow)]{1+1,0,1+1,2+1}
	\,-\, \yts{aC\bullet,b,\none E,\none\none D}  *[*(orange)]{2+1}*[*(red)]{1,1}*[*(yellow)]{1+1,0,1+1,2+1}
	\,+\, \yts{aD\bullet,b,\none E,\none\none C}  *[*(orange)]{2+1}*[*(red)]{1,1}*[*(yellow)]{1+1,0,1+1,2+1}.\]
	Hence $\ov \Stab(A_x) \ov C(D) R(D)$ is a multiple of $\pi \cdot \ov \Stab(A_y') \ov C(D) R(D)$, where the terms in $\ov \Stab(A_x)$ (resp. $\ov \Stab(A_y')$) stabilize the sets of boxes containing lowercase and uppercase letters on the left (resp. right) hand side.
\end{ex}

\subsection{Maps between Specht modules}

Let $\psi \colon D \to E$ be a bijection between the boxes of two diagrams. In the same way that we think of tableaux of shapes $D$ and $E$ as elements of $\Sym_n$, we may think of $\psi$ as an element of $\Sym_n$. If $T$ is a tableau of shape $E$, then $T \psi$ is the tableau of shape $D$ whose labels correspond to those of $T$ under $\psi$.

If $D$ and $E$ have rows of the same sizes, then we can use a row-preserving bijection $\psi\colon D \to E$ to identify $R(D)$ and $R(E)$ via $\psi R(D) = R(E)\psi$. Since $S^D \subset \CC[\Sym_n]R(D)$ and $S^E \subset \CC[\Sym_n]R(E)$, we can then ask for the relationship between $S^E\psi$ and $S^D$ inside $\CC[\Sym_n]R(D)$.

First, we give a slightly modified version of Proposition~\ref{prop-relations}.

\begin{prop} \label{prop-unique}
Let $A$ be a tableau of shape $D$ such that $A$ is the unique row-strict tableau of the form $A \cdot \sigma$ for $\sigma \in C_D$. Then $\ov\Stab(A) \ov C(D) R(D)$ is a nonzero scalar multiple of $\ov \Stab(A) R(D)$. In particular, $\ov \Stab(A) R(D) \in S^D$.
\end{prop}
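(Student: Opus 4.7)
The plan is to expand
\[\ov\Stab(A)\,\ov C(D)\,R(D) = \sum_{\sigma \in C_D} \sgn(\sigma)\, \ov\Stab(A)\,\sigma\, R(D)\]
and evaluate each summand separately. The hypothesis forces a clean dichotomy on the index $\sigma$: the tableau $A$ itself must be row-strict (being the unique such tableau in its $C_D$-orbit, taking $\sigma=\id$), so for every $\sigma \in C_D$ either $A\sigma = A$ (equivalently $\sigma \in \Stab_A \cap C_D$) or else $A\sigma$ is not row-strict.

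For $\sigma$ with $A\sigma$ not row-strict, I would apply exactly the manipulation from the proof of Proposition~\ref{prop-relations}: pick a transposition $\tau \in R_D$ fixing $A\sigma$, so that $\sigma\tau\sigma^{-1} \in \Stab_A$, and use
\[(\id - \sigma\tau\sigma^{-1})\cdot \sigma \cdot (\id + \tau) = \sigma(\id - \tau)(\id + \tau) = 0,\]
together with the facts that $(\id - \sigma\tau\sigma^{-1})$ is a right factor of $\ov\Stab(A)$ and $(\id + \tau)$ is a left factor of $R(D)$, to conclude $\ov\Stab(A)\sigma R(D) = 0$. For the surviving $\sigma \in \Stab_A \cap C_D$, absorption gives $\ov\Stab(A)\sigma = \sgn(\sigma)\ov\Stab(A)$, so $\sgn(\sigma)\,\ov\Stab(A)\,\sigma\, R(D) = \ov\Stab(A) R(D)$. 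Summing yields
\[\ov\Stab(A)\,\ov C(D)\,R(D) = |\Stab_A \cap C_D|\cdot \ov\Stab(A)\, R(D),\]
with scalar positive since $\id \in \Stab_A \cap C_D$.

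It remains to check that $\ov\Stab(A) R(D) \neq 0$. Since $A$ is row-strict, any permutation in $\Stab_A \cap R_D$ must act as the identity on the labels within each row of $A$, hence $\Stab_A \cap R_D = \{\id\}$. Therefore the products $\pi\rho$ for $\pi \in \Stab_A$, $\rho \in R_D$ are pairwise distinct in $\Sym_n$, making $\ov\Stab(A) R(D)$ a nonzero $\pm 1$-linear combination of distinct group elements. Since $\ov\Stab(A)\,\ov C(D)\,R(D)$ lies in $\CC[\Sym_n]\,\ov C(D)\,R(D) = S^D$, the displayed identity also places $\ov\Stab(A) R(D)$ in $S^D$, completing the proof. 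I do not anticipate any serious obstacle; this is essentially a refinement of Proposition~\ref{prop-relations} that tracks the contributions indexed by $\Stab_A \cap C_D$, which in the earlier setting were excluded by the stronger hypothesis there.
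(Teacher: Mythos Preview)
Your argument is correct and follows essentially the same route as the paper: split the sum over $\sigma \in C_D$ according to whether $A\sigma$ is row-strict, kill the non-row-strict terms using the mechanism from Proposition~\ref{prop-relations}, and absorb the remaining $\sigma \in C_D \cap \Stab_A$ into $\ov\Stab(A)$ to produce the scalar $|C_D \cap \Stab_A|$. Your additional verification that $\ov\Stab(A)R(D) \neq 0$ via $\Stab_A \cap R_D = \{\id\}$ is a nice touch that the paper omits (it reads ``nonzero scalar multiple'' as referring only to the scalar), but it does no harm and the reasoning is sound.
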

\begin{proof}
From the proof of Proposition~\ref{prop-relations}, if $A \cdot \sigma$ is not row-strict, then $\ov \Stab(A) \sigma R(D)=0$. Hence
\begin{align*}\ov \Stab(A) \ov C(D) R(D) &= \ov \Stab(A) \cdot \sum_{\sigma \in C_D \cap \Stab_A} \sgn(\sigma)\sigma \cdot R(D)\\ &= |C_D \cap \Stab_A| \cdot \ov \Stab(A) R(D). \qedhere
\end{align*}
\end{proof}

From this, we can deduce the following embedding of Specht modules.

\begin{prop} \label{prop-submodule}
Let $A$ satisfy the hypothesis of Proposition~\ref{prop-unique}. Let $E$ be the diagram with boxes $(i, j)$ for all $j$ appearing as labels in row $i$ of $A$. Then the corresponding row-preserving bijection $\psi\colon D \to E$ induces an injection of $\Sym_n$-modules $\psi^*\colon S^E \to S^D$ sending $e_T \mapsto e_T \cdot \psi$.
\end{prop}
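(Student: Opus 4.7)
The plan is to exploit the fact that right multiplication by $\psi$ is a bijection of $\CC[\Sym_n]$ that commutes with the left $\Sym_n$-action. Consequently, once I verify that this operation carries $S^E$ into $S^D$, the resulting map $\psi^*(e_T) = e_T\psi$ will automatically be well-defined, $\Sym_n$-linear, and injective. So the entire content of the proposition lies in establishing the inclusion $S^E \cdot \psi \subset S^D$.

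The first step is to record the structural consequences of the row and column data. Row-preservation of $\psi$ gives $R_D = \psi^{-1} R_E \psi$, hence $R(E)\psi = \psi R(D)$. The more substantive observation is the conjugation identity $\psi^{-1} C_E \psi = \Stab_A$: the preimage under $\psi$ of column $j$ of $E$ is exactly the level set $\{x \in D : A_x = j\}$, since by construction $\psi(x) = (i, A_x)$ whenever $x$ lies in row $i$ of $D$. A permutation stabilizes each column of $E$ setwise iff its $\psi$-conjugate stabilizes each such level set, which is precisely the defining condition of $\Stab_A$. Passing to signed sums then upgrades this to $\ov C(E)\psi = \psi\,\ov\Stab(A)$.

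With both identities in hand, for any injective tableau $T$ of shape $E$ I would compute
\[e_T\psi \;=\; T\,\ov C(E) R(E)\,\psi \;=\; T\,\ov C(E)\psi\, R(D) \;=\; T\psi\,\ov\Stab(A)\, R(D).\]
Proposition~\ref{prop-unique} then says $\ov\Stab(A) R(D)$ is a nonzero scalar multiple of $\ov\Stab(A)\ov C(D) R(D)$ and hence lies in $S^D$; left-ideal closure of $S^D$ yields $e_T\psi \in S^D$, finishing the argument.

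I expect the main obstacle to be the identification $\psi^{-1} C_E \psi = \Stab_A$. This step is what explains why the definition of $E$ from $A$ is the correct one: although $\psi$ does not intertwine the column groups of $D$ and $E$, it trades $C_E$ for $\Stab_A$, and Proposition~\ref{prop-unique} is tailor-made to ensure that $\Stab_A$ then interacts with $R(D)$ in a way that lands inside $S^D$.
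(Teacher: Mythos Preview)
Your proposal is correct and follows the same approach as the paper's proof. The paper's argument is essentially the one-line computation $T\cdot\ov C(E)R(E)\psi = T\cdot\ov C(E)\psi R(D) = T\psi\cdot\ov\Stab(A)R(D)\in S^D$; you simply make explicit what the paper leaves implicit, namely the conjugation identity $\psi^{-1}C_E\psi = \Stab_A$ and the fact that right multiplication by the invertible element $\psi$ is automatically injective and $\Sym_n$-equivariant.
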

\begin{proof}
By Proposition~\ref{prop-unique}, \[T \cdot \ov C(E) R(E) \psi = T \cdot \ov C(E) \psi R(D) = T\psi \cdot \ov \Stab(A) R(D) \in S^D.\qedhere\]
\end{proof}

\begin{ex}
Let $A$ be the tableau of shape $D$ shown to the left below with corresponding diagram $E$ shown to the right.
\[\yts{1234,1234,1,\none 1,\none\none 2, \none\none\none 3} \quad \hookleftarrow \quad 
\yts{1234,1234,1,1,\none 2, \none\none 3}.\]
Since $A$ satisfies the condition of Proposition~\ref{prop-unique}, there is an inclusion of $S^E$ into $S^D$.
\end{ex}

The most important example of Proposition~\ref{prop-submodule} is when $E$ is obtained from $D$ by replacing two columns of $D$ with their intersection and their union. More generally, we can do something similar with any $k$ columns.

\begin{prop} \label{prop-smash}
Let $D$ be a diagram, and fix $k\geq 1$. Let $r_i = |\{(i,j) \in D \mid j \leq k\}|$. Let $E$ be the diagram obtained from $D$ by replacing the first $k$ columns with $\{(i,j)\mid j \leq r_i\}$. Then there exists an embedding of $S^E$ into $S^D$ as in Proposition~\ref{prop-submodule}.
\end{prop}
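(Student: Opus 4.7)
The plan is to construct a tableau $A$ of shape $D$ satisfying the hypothesis of Proposition~\ref{prop-unique}, and then invoke Proposition~\ref{prop-submodule}. Define $A$ by setting $A_{i,j} = j$ for $(i,j) \in D$ with $j > k$, and $A_{i,j} = |\{j' \leq j : (i,j') \in D\}|$ for $(i,j) \in D$ with $j \leq k$. Thus boxes past column $k$ carry their own column index, while in row $i$ the $r_i$ boxes lying in the first $k$ columns receive labels $1, 2, \ldots, r_i$ from left to right. The set of labels appearing in row $i$ of $A$ is $\{1, \ldots, r_i\} \cup \{j > k : (i,j) \in D\}$, which is precisely the set of column indices of row $i$ in $E$; in particular $A$ is row-strict, and the diagram associated with $A$ in the sense of Proposition~\ref{prop-submodule} is $E$.

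It remains to check that $A$ is the unique row-strict tableau among $\{A\sigma : \sigma \in C_D\}$. Any $\sigma \in C_D$ decomposes as a commuting product $\sigma = \prod_j \sigma_j$ with $\sigma_j$ permuting the boxes in column $j$. For $j > k$ column $j$ of $A$ is constant, so $\sigma_j$ has no effect on the tableau. For $j \leq k$, I claim by induction on $j$ that whenever $A\sigma$ is row-strict, $(A\sigma)_{i,j} = A_{i,j}$ for every $(i,j) \in D$. The base case $j = 1$ is immediate, since column 1 of $A$ consists entirely of $1$'s.

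For the inductive step, suppose the claim holds in columns $1, \ldots, j-1$. Fix $(i,j) \in D$ and set $r_{i,j} := A_{i,j}$; by the induction hypothesis, the labels of $A\sigma$ at the boxes $(i, j') \in D$ with $j' < j$ coincide with those of $A$, namely the set $\{1, 2, \ldots, r_{i,j}-1\}$. Row-strictness at row $i$ therefore forces $(A\sigma)_{i,j} \geq r_{i,j}$. Writing $N_t = |\{i : A_{i,j} = t\}|$ for $t = 1, \ldots, j$, the multiset of column-$j$ labels (preserved by $\sigma_j$) consists of $N_t$ copies of $t$ for each $t$. A greedy pigeonhole from $t = j$ downward now pins down the assignment: the $N_j$ boxes with $A_{i,j} = j$ are the only ones that can receive a label $\geq j$, so they absorb all $N_j$ copies of $j$; the $N_{j-1}$ boxes with $A_{i,j} = j-1$ are then the only remaining positions able to hold $j-1$; and so on. Hence $(A\sigma)_{i,j} = A_{i,j}$, completing the induction and verifying the hypothesis of Proposition~\ref{prop-unique}. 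Proposition~\ref{prop-submodule} then yields the desired embedding $S^E \hookrightarrow S^D$.

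The only delicate step is the greedy pigeonhole in the inductive step; it succeeds because the lower bound $(A\sigma)_{i,j} \geq A_{i,j}$ holds simultaneously at every box in column $j$, and the column-$j$ label multiset is fixed by $\sigma_j$, so counting labels $\geq t$ on both sides forces equality at each level. Notably, one never needs to invoke row-strictness constraints coming from columns to the right of $j$.
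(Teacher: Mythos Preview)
Your proof is correct and follows essentially the same route as the paper: you construct the identical tableau $A$ and verify the hypothesis of Proposition~\ref{prop-unique} by working left to right across the first $k$ columns. The paper's phrasing is slightly terser---it simply takes the leftmost column where $A\sigma$ differs from $A$, observes that some entry there must have decreased (since $\sigma$ permutes the column multiset), and notes that this decreased label already appears earlier in that row---but your explicit pigeonhole from the top label downward is the same argument unpacked.
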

\begin{proof}
Define the tableau $A$ of shape $D$ by $A_{ij} = j$ if $j>k$, and otherwise $A_{ij} = |\{(i,j') \in D \mid j' \leq j\}|$. (The labels in the first $k$ columns of $A$ increase consecutively in each row starting with 1.)

Then $A$ satisfies the condition of Proposition~\ref{prop-unique}: if $A\sigma \neq A$ for some $\sigma \in C_D$, then let $j$ be the leftmost column in which they differ. Then there exists some row $i$ such that $(A\sigma)_{ij} < A_{ij}$. We must have $j \leq k$ since labels after the first $k$ columns cannot change, but then $A\sigma$ will not be row-strict since $(A\sigma)_{ij}$ will repeat a label earlier in the column.
\end{proof}

\begin{ex}
Let $D$ be the diagram shown to the left below. Then applying Proposition~\ref{prop-smash} with $k=4$, we find that there is an inclusion of $S^E$ into $S^D$ induced by the row-preserving bijection mapping these two tableaux to each other.
\[
\yts{12\none35,\none12\none\none6,\none\none12\none\none7} \quad \hookleftarrow \quad
\yts{123\none5,12\none\none\none6,12\none\none\none\none7}
\]
\end{ex}

Clearly we can apply Proposition~\ref{prop-smash} to modify any $k$ columns, not just the first $k$.

Propositions~\ref{prop-relations}, \ref{prop-unique}, and \ref{prop-submodule} have dual versions concerning column-strictness.
\begin{prop}\label{prop-mergerows}
Let $A$ be a tableau of shape $D$.
\begin{enumerate}[(a)]
\item Suppose $A\sigma$ is not column-strict for any $\sigma \in R_D$. Then $\ov C(D)R(D)\Stab(A) = 0$.
\item Suppose $A$ is the unique column-strict tableau of the form $A\sigma$ for $\sigma \in R_D$. Then $\ov C(D)R(D)\Stab(A)$ is a nonzero scalar multiple of $\ov C(D)\Stab(A)$.
\item Let $A$ be as in part (b), and let $E$ be the diagram with boxes $(i, j)$ for all $i$ appearing as labels in column $j$ of $A$. Then the corresponding column-preserving bijection $\psi \colon D \to E$ induces a surjection $\psi_*\colon S^D \to S^E$ sending $e_T \mapsto e_{T\psi^{-1}}$.
\end{enumerate}
\end{prop}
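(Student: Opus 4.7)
The three parts are duals of Propositions~\ref{prop-relations}, \ref{prop-unique}, and \ref{prop-submodule}, with rows and columns exchanged and signed/unsigned stabilizers swapped. The plan is to mirror each earlier proof, with one small twist in how one applies $\sigma$ versus $\sigma^{-1}$.

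For (a), the plan is to show $\ov C(D)\sigma\Stab(A)=0$ for each $\sigma\in R_D$ and sum. The twist is to invoke the hypothesis on $\sigma^{-1}\in R_D$ rather than on $\sigma$ itself, which produces a column transposition $\tau\in C_D$ with $A\sigma^{-1}\tau=A\sigma^{-1}$, hence $\sigma^{-1}\tau\sigma\in\Stab_A$. With $(\id-\tau)$ a right factor of $\ov C(D)$ and $(\id+\sigma^{-1}\tau\sigma)$ a left factor of $\Stab(A)$, the computation $(\id-\tau)\sigma(\id+\sigma^{-1}\tau\sigma)=(\id-\tau)(\id+\tau)\sigma=0$ closes the case. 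Part (b) will then follow by splitting $R(D)=\sum_\sigma\sigma$: by uniqueness, $A\sigma^{-1}$ is not column-strict for every $\sigma\notin\Stab_A$, so those terms vanish by the (a)-argument, while each $\sigma\in R_D\cap\Stab_A$ contributes $\ov C(D)\Stab(A)$ since $\sigma\Stab(A)=\Stab(A)$, yielding the scalar $c=|R_D\cap\Stab_A|$.

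For (c), the column-preserving bijection $\psi:D\to E$ satisfies $\psi\ov C(D)=\ov C(E)\psi$ by conjugation. The crucial dual observation is that $\psi$ also conjugates $\Stab_A$ onto $R_E$: each level set $\{x\in D : A_x=k\}$ is sent by $\psi((r,j))=(A_{(r,j)},j)$ precisely to row $k$ of $E$, which is how $E$ is built. This yields $\Stab(A)\psi^{-1}=\psi^{-1}R(E)$, and combining with (b) gives
\[e_T\cdot\Stab(A)\psi^{-1}=T\,\ov C(D)\,R(D)\,\Stab(A)\,\psi^{-1}=c\,T\,\ov C(D)\,\psi^{-1}\,R(E)=c\,e_{T\psi^{-1}},\]
so $\psi_*$ is right multiplication by $c^{-1}\Stab(A)\psi^{-1}$, manifestly $\Sym_n$-equivariant, with surjectivity immediate since any tableau $U$ of shape $E$ lifts as $T=U\psi$.

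The main obstacle is the subtle choice in (a): applying the hypothesis to $\sigma$ directly gives only $\sigma\tau\sigma^{-1}\in\Stab_A$, and the commutation $(\id-\tau)\sigma=\sigma(\id-\sigma^{-1}\tau\sigma)$ produces an element on the right of $\sigma$ that does not match $\sigma\tau\sigma^{-1}$, so the factorization breaks. Flipping to $\sigma^{-1}$ synchronizes the two factors so that $(\id-\tau)(\id+\tau)=0$ emerges; once (a) is set up this way, (b) and (c) follow routinely along the dual template.
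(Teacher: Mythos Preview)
Your proof is correct and follows essentially the same route as the paper's: for (a), the paper likewise shows $\ov C(D)\,\sigma^{-1}\,\Stab(A)=0$ for each $\sigma\in R_D$ via the identity $(\id-\tau)\,\sigma^{-1}\,(\id+\sigma\tau\sigma^{-1})=0$ and sums, while (b) and (c) are handled by the same splitting and the same right-multiplication-by-$\Stab(A)\psi^{-1}$ argument you give. Your ``twist'' of applying the hypothesis to $\sigma^{-1}$ rather than $\sigma$ is just the reparametrization $\sigma\leftrightarrow\sigma^{-1}$ of the paper's argument and not a genuine obstacle---both versions land on $(\id-\tau)(\id+\tau)=0$ after a one-step commutation.
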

\begin{proof}
If $A\sigma$ is not column-strict for some $\sigma \in R_D$, then there exists a transposition $\tau \in C_D$ such that $\sigma\tau\sigma^{-1} \in \Stab_A$. Then $(\id - \tau)\sigma^{-1}(\id + \sigma\tau\sigma^{-1})=0$ implies $\ov C(D)\sigma^{-1}\Stab(A) = 0$ as in Proposition~\ref{prop-relations}. Part (a) follows by summing over all $\sigma \in R_D$.

For part (b), summing over all $\sigma \in R_D$ gives
\begin{align*}
\ov C(D)R(D)\Stab(A) &= \sum_{\sigma \in R_D \cap \Stab(A)} \ov C(D) \sigma^{-1} \Stab(A) \\ &= |R_D \cap \Stab(A)| \cdot \ov C(D)\Stab(A).
\end{align*}

For part (c), note that $\psi_*$ is, up to a scalar factor, just multiplication by $\Stab(A) \psi^{-1}$, since for any tableau $T$ of shape $D$, $e_T\cdot \Stab(A)\psi^{-1}$ is a constant times
\[T \cdot \ov C(D)\Stab(A)\psi^{-1} = T \cdot \ov C(D) \psi^{-1} R(E) = T\psi^{-1} \cdot \ov C(E)R(E). \qedhere\]
\end{proof}




\section{Branching}
In this section, we will use the results of \S3 to give a combinatorial criterion for when a diagram branches completely.

\subsection{Maximal rectangles}

The key condition we will need for complete branching involves \emph{maximal rectangles}.

\begin{defn}
A \emph{rectangle} $P \subset D$ is a subset of the form $P_1 \times P_2$ for some $P_1, P_2 \subset \NN$. A \emph{maximal rectangle} is one that is maximal under inclusion.
\end{defn}

For example, in the Young diagram of a partition, there is one maximal rectangle for each corner box of the diagram.

\begin{defn}
We say that $B \subset D$ is an \emph{exact hitting set} for the maximal rectangles of $D$ if every maximal rectangle contains exactly one element of $B$.
\end{defn}

The corner boxes of a Young diagram form such an exact hitting set. This is not a coincidence, as we shall see from the following result.

\begin{prop} \label{prop-hitting}
Suppose $B$ is a branching set of $D$. Then $B$ is an exact hitting set for the maximal rectangles of $D$.
\end{prop}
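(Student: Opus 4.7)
The proof splits into two directions: showing each maximal rectangle contains at most one box of $B$, and showing each contains at least one.

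\textbf{Part 1 (at most one).} Suppose $b_p, b_q \in B \cap P$ with $p < q$ in the special transversal ordering, and write $P = P_1 \times P_2$, $b_p = (i_p, j_p)$, $b_q = (i_q, j_q)$. Since both lie in $P$, we have $i_p \in P_1$ and $j_q \in P_2$, so $(i_p, j_q) \in P_1 \times P_2 = P \subseteq D$. This contradicts the defining condition $(i_p, j_q) \notin D$ of a special transversal. Hence each maximal rectangle contains at most one box of $B$.

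\textbf{Part 2 (at least one).} Suppose, for contradiction, that $P = P_1 \times P_2$ is a maximal rectangle with $|P_1| = a$, $|P_2| = b$, and $B \cap P = \emptyset$. The goal is to contradict the branching isomorphism via Corollary~\ref{cor-subrep}, which requires $V_1 = S^D$ in the filtration from Proposition~\ref{prop-filtration}. I will show $V_1 \subsetneq S^D$.

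First, observe a combinatorial consequence of $B \cap P = \emptyset$: writing $B_{\mathrm{col}} = \{b_q \in B : j_q \in P_2\}$ and $B_{\mathrm{row}} = \{b_p \in B : i_p \in P_1\}$, these are disjoint, and the same argument as in Part 1 shows that every element of $B_{\mathrm{col}}$ precedes every element of $B_{\mathrm{row}}$ in the ordering (otherwise $(i_p, j_q) \in P \subseteq D$, violating the special transversal condition). Moreover, $|B_{\mathrm{col}}| \leq b$ and $|B_{\mathrm{row}}| \leq a$, with the constraint that $B_{\mathrm{col}}$ consists of boxes whose rows lie outside $P_1$ and $B_{\mathrm{row}}$ of boxes whose columns lie outside $P_2$.

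The plan is to produce an $e_T \in S^D$ not in $V_1$ using Proposition~\ref{prop-relations}. Consider the tableau $A$ of shape $D$ labeling all boxes of $P$ with a single repeated value (say $1$) and other boxes with distinct values. Using the maximality of $P$---no row outside $P_1$ contains all of $P_2$, and no column outside $P_2$ contains all of $P_1$---a Hall's-theorem-style counting argument shows that $A\sigma$ cannot be made row-strict by any $\sigma \in C_D$, so Proposition~\ref{prop-relations} yields a nontrivial relation
\[\sum_{\pi \in \Stab_A} \sgn(\pi)\, e_{T\pi} = 0\]
among tableaux obtained by permuting labels on the $P$-boxes. Choosing $T$ with $n$ placed at a box of $P$ whose column lies outside $\{j_q : b_q \in B_{\mathrm{col}}\}$ (possible when $|B_{\mathrm{col}}| < b$, and similarly for rows), this relation, together with the constraint that $B_{\mathrm{col}}$ precedes $B_{\mathrm{row}}$, obstructs the expression of $e_T$ as a combination of tableaux with $n$ at a $B$-box, yielding $e_T \notin V_1$.

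\textbf{Main obstacle.} The crux is verifying that $A\sigma$ cannot be made row-strict for any $\sigma \in C_D$ in full generality---this requires careful counting using Hall's theorem on the bipartite graph of ``label-$1$ positions versus rows touching $P_2$,'' exploiting the maximality of both $P_1$ (rowwise) and $P_2$ (columnwise). When $P$'s columns or rows extend significantly beyond $P$, a more elaborate tableau $A$ may be needed, but the maximality of $P$ in both directions should provide the combinatorial leverage to rule out all row-strict $A\sigma$.
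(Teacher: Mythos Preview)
Your Part~1 is correct and essentially what the paper does.

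Part~2, however, does not work, for several independent reasons.

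\medskip

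\textbf{The target $V_1 \subsetneq S^D$ is wrong.} It can happen that $B$ misses a maximal rectangle and yet $V_1 = S^D$. For instance, take $D$ to be the Young diagram of $(2,2,1,1)$ and $B = \{(3,1)\}$. Every column of $D$ has its row-set contained in that of column~1 (which contains $b_1$), so the two-column Garnir argument of Proposition~\ref{prop-v1} shows $V_1 = S^D$; but the maximal rectangle $\{1,2\}\times\{1,2\}$ avoids $B$. The paper does \emph{not} try to show $V_1 \neq S^D$ in general: it picks $p = \min(P_1)$ (after normalizing $b_i=(i,i)$) and shows $\ker\varphi_{p-1} \not\subset V_p$, which is a different layer of the filtration.

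\medskip

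\textbf{The row-strictness claim is false.} Your key assertion, that labeling all of $P$ by $1$ gives a tableau $A$ with $A\sigma$ never row-strict for $\sigma \in C_D$, fails already for
\[
D=\{(1,1),(1,2),(2,1),(2,2),(3,1),(4,2)\},\qquad P=\{1,2\}\times\{1,2\}.
\]
Here $P$ is maximal, but the column permutation sending the $1$'s in column~1 to rows $\{1,3\}$ and those in column~2 to rows $\{2,4\}$ makes $A\sigma$ row-strict. Maximality of $P$ only says no \emph{single} extra row meets all of $P_2$; it does not prevent the columns of $P_2$ from jointly covering enough rows outside $P_1$ for a Hall-type system of distinct representatives to exist.

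\medskip

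\textbf{The logical direction is backwards.} Even if Proposition~\ref{prop-relations} applied, it would give you $\sum_\pi \sgn(\pi)\,e_{T\pi}=0$, i.e.\ a way to rewrite $e_T$ in terms of other $e_{T\pi}$. That is evidence \emph{for} membership in a subspace, not against it; you have no mechanism to conclude $e_T\notin V_1$ from such a relation.

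\medskip

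The paper's argument is structurally different: it uses Proposition~\ref{prop-smash} on the columns $P_2$ to embed $S^E\hookrightarrow S^D$ for an auxiliary diagram $E$ possessing a column whose row-set is exactly $P_1$, places $n$ in that column in row $p$, and then, comparing $D\backslash\{b_p\}$ with $E\backslash\{(p,j)\}$, uses the surjection/vanishing dichotomy of Proposition~\ref{prop-mergerows} to exhibit an element of $\ker\varphi_{p-1}$ whose image under $\varphi_p$ lies outside $S^{D\backslash\{b_p\}}$.
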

\begin{proof}
Since $B = \{b_1, b_2, \dots, b_k\}$ is a branching set, it is a special transversal. Thus no rectangle can contain two elements of $B$. It therefore suffices to show that every maximal rectangle contains an element of $B$. We will assume without loss of generality that $b_i = (i, i)$.

Let $P = P_1 \times P_2$ be a maximal rectangle, and let $p = \min(P_1)$; if $p>k$, we may assume without loss of generality that $p=k+1$. Suppose that $b_p \not \in P$. Using the notation of Proposition~\ref{prop-filtration}, we will show that $\ker \varphi_{p-1} \not \subset V_p$, which will contradict that $B$ is a branching set. (If $p=1$, we will show that $S^D \not \subset V_p$.)

Let $E$ be the diagram obtained from $D$ by applying Proposition~\ref{prop-smash} to the columns in $P_2$. Since $P$ is a maximal rectangle, there is a column $j$ of $E$ that contains boxes precisely in the rows of $P_1$. Let $\psi\colon D\to E$ be the bijection found in Proposition~\ref{prop-smash}. Let $W \subset S^E$ be the linear subspace spanned by $e_T$, where $T$ is a tableau of shape $E$ with $T_{pj}=|D|=n$. Then $\psi^*(W)$ lies in the kernel of $\varphi_{p-1}$ since by our choice of $p$, $e_T\psi$ does not contain any terms with $n$ in rows $1, \dots, p-1$. If $p=k+1$, it follows immediately that $\ker \varphi_{p-1}\not\subset V_{p} = 0$. Otherwise, if $D' = D \backslash \{b_p\}$ and $E' = E \backslash \{(p, j)\}$, then $\varphi_p(V_p) = S^{D'}$ and $\varphi_p(\psi^*(W)) = \widetilde{\psi}^*(S^{E'}),$ where $\widetilde{\psi}\colon D' \to E'$ is a certain row-preserving bijection. We will show that $\psi^*(W) \not \subset V_p$ by showing that $\widetilde{\psi}^*(S^{E'}) \not \subset S^{D'}$.

Since $b_p \not \in P$ implies $p \not\in P_2$, by maximality of $P$ there exists a row $i \in P_1$ such that $(i,p) \not \in D$. Any row in $P_1$ is identical in $D$ and $E$, so rows $i$ and $p$ of $E'$ are identical to those of $D'$ except $(p,j)$ has been replaced by $b_p=(p,p)$. Since $(i,j)$ lies in both $D'$ and $E'$ but $(i,p)$ lies in neither, this means that rows $p$ and $i$ intersect fewer columns in $D'$ than in $E'$.
\[
D': \qquad
\by
\none&\none&\none&\none[p]&\none[j]\\
\none&\none&\none&\none[\downarrow]&\none[\downarrow]\\
\none[p]&\none[\to]&\none[\cdots]&\none&&\none[\cdots]\\
\none[i]&\none[\to]&\none[\cdots]&\none&&\none[\cdots]\\
\ey
\qquad\qquad\qquad
E': \qquad
\by
\none&\none&\none&\none[p]&\none[j]\\
\none&\none&\none&\none[\downarrow]&\none[\downarrow]\\
\none[p]&\none[\to]&\none[\cdots]&&\none&\none[\cdots]\\
\none[i]&\none[\to]&\none[\cdots]&\none&&\none[\cdots]\\
\ey
\]

\bigskip 
Let $A$ be the tableau of shape $E'$ for which $A_{pq} = 1$ for all $q$, $A_{iq} = 1$ if $(p,q) \not \in E'$, $A_{iq}=2$ otherwise, and all other boxes are labeled by their row. Then $A$ satisfies the condition of Proposition~\ref{prop-mergerows}(b), so $S^{E'}\Stab(A)$ does not vanish. But
\[S^{E'}\Stab(A) = \widetilde{\psi}^*(S^{E'}) \cdot \widetilde{\psi}^{-1}\Stab(A) = \widetilde{\psi}^*(S^{E'}) \cdot \Stab(A\widetilde\psi)\cdot\widetilde\psi^{-1},\]
and $A\widetilde\psi$ satisfies the condition of Proposition~\ref{prop-mergerows}(a)---$D'$ has one fewer column intersecting rows $p$ and $i$ than $E'$, so $(A\widetilde\psi)\sigma$ cannot be column-strict for any $\sigma \in R_{D'}$. Hence $S^{D'}\Stab(A\widetilde\psi)=0$, so $S^{D'}$ is annihilated by $\Stab(A\widetilde\psi)$ while $\widetilde{\psi}^*(S^{E'})$ is not. Thus $\widetilde{\psi}^*(S^{E'}) \not \subset S^{D'}$, which completes the proof.
\end{proof}

\begin{rmk}
	The converse of Proposition~\ref{prop-hitting} is not true, that is, if $B$ is an exact hitting set for the maximal rectangles of $D$, then $B$ may not be a branching set. For one, $B$ may not be a special transversal: for example, it is easy to construct an exact hitting set for the maximal rectangles of the diagram in Remark~\ref{rmk-branching}(b) by taking exactly one box in each row and column, but such a set $B$ is evidently not a special transversal.
	
	Even if $B$ is a special transversal, it need not be a branching set. For example, let
	\[D = \ydiagram{1+2,2,1}*[*(lightgray)]{5+1,4+1,2+1,0,1+1,1}\,.\]
	The set $B$ of shaded boxes is a special transversal and an exact hitting set for the maximal rectangles of $D$. However, an explicit computation shows that $D$ does not branch with respect to $B$: the multiplicity of $S^{51111}$ in $\Res^{\Sym_{10}}_{\Sym_9} S^D$ is 2, while its multiplicity in $\bigoplus_{x \in B} S^{D \backslash \{x\}}$ is only 1.
\end{rmk}

Even though the converse of Proposition~\ref{prop-hitting} is false, we will give a partial converse by restricting to completely branching diagrams in our main theorem below, Theorem~\ref{thm-main}. For now, we will settle for the following weaker statement.

\begin{prop} \label{prop-v1}
	Let $D$ be a diagram with $n$ boxes, and let $B \subset D$ be a special transversal. If $B$ is an exact hitting set for the maximal rectangles of $D$, then $S^D$ is spanned by $e_T$, where $T_b = n$ for some $b \in B$.
\end{prop}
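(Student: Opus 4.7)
The plan is to show that every generator $e_T$ of $S^D$ lies in $V_1$, by strong induction on $x$, the box of $T$ containing the label $n$, with respect to a total order $\prec$ on $D$ in which the boxes of $B$ are the minimal elements. Within $D \setminus B$ I would choose $\prec$ compatibly with the maximal-rectangle structure: for each maximal rectangle $P = R \times C$ with $B \cap P = \{b\}$, rank the boxes of $P$ so that column $j^*$ (the column of $b$) comes before any other column of $C$, and within columns rank by row. The target of the inductive step will be to rewrite $e_T$ as a linear combination of $e_{T'}$ in which every term either has $n$ at some $b \in B$ (hence in $V_1$) or has $n$ at a box strictly $\prec$--smaller than $x$.

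For the inductive step, suppose $T_x = n$ with $x \notin B$. Pick a maximal rectangle $P = R \times C$ containing $x$ and let $b = (i^*, j^*)$ be the unique element of $B \cap P$. If $x$ lies in column $j^*$, a single column relation from $\ov C(D)$ gives $e_T = \pm e_{T'}$ with $T'_b = n$ and we are done. Otherwise write $x = (i_0, j_0)$ with $j_0 \neq j^*$ and apply the two-column Garnir relation from Proposition~\ref{prop-relations} for columns $j_0$ and $j^*$, taking $A$ to be the set of all boxes in these two columns. This is a valid Garnir because $R \neq \varnothing$ guarantees that for every $\sigma \in C_D$ some row of $R$ contains boxes of $A$ in both $j_0$ and $j^*$, so $A \cdot \sigma$ is never row-strict. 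Expanding the relation over coset representatives for $\Sym_A \cap C_D$ in $\Sym_A$ (Remark~\ref{rmk-mcr}) splits the sum into terms where $n$ ends up in column $j^*$---which, via one further column relation, reduce to terms with $n$ at $b \in B$, landing in $V_1$---and terms where $n$ ends up in column $j_0$, which we address next.

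The main obstacle is the column-$j_0$ terms: the label $n$ has not moved, so $\prec$-descent is not immediate. I would handle these by noting that each such term $e_{T \pi_S}$ differs from $e_T$ only in that the multiset of labels appearing in columns $j_0$ and $j^*$ has been partially swapped. Using Proposition~\ref{prop-move} to rearrange the Garnir so that it separates a single distinguished column-$j_0$ term (namely $e_T$ itself), the remaining column-$j_0$ terms can be collected and shown, via further column relations, to equal a scalar multiple of $e_T$ with coefficient different from $1$, letting us solve for $e_T$ in terms of the column-$j^*$ terms alone. An alternative approach, which avoids this self-referential step, is to replace the two-column Garnir by the multi-column Garnir that uses all columns in $C$ (whose validity is guaranteed by maximality of $P$ since then the number of rows meeting every column in $C$ equals $|R|$); the additional structure, together with Proposition~\ref{prop-move} to select a specific column-$j^*$ representative, yields a Garnir expression whose non-$V_1$ terms all have $n$ strictly below $x$ in $\prec$, allowing the induction to close cleanly.
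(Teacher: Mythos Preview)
Your approach has a genuine gap at exactly the point you flag as ``the main obstacle.'' When you take $A$ to be \emph{all} boxes of columns $j_0$ and $j^*$, the coset representatives for $\Sym_A \cap C_D$ in $\Sym_A$ are indexed by the subsets $S \subset A$ of size $|C_{j_0}|$; the label $n$ ends up in column $j_0$ precisely when $x \in S$. Apart from the identity coset $S = C_{j_0}$, each such term is $e_{T'}$ for a tableau $T'$ in which $n$ is still at $x$ but the remaining labels of columns $j_0$ and $j^*$ have been nontrivially exchanged. These are \emph{not} scalar multiples of $e_T$, nor do they collapse to one after column relations, so you cannot ``solve for $e_T$'' as you propose. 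The alternative multi-column fix is too vague to rescue the argument, and Proposition~\ref{prop-move} does not produce the cancellation you need here.

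The paper's proof avoids this obstacle entirely by a different choice of $A$. Rather than taking both columns in full, it first chooses the maximal rectangle $P$ containing the \emph{entire} column of $x$ (not just the box $x$); then, with $b_i \in B \cap P$, every row meeting column $j$ also meets column $i$. Now set $A$ to be all of column $i$ together with the \emph{single} box $x$. This still satisfies the Garnir hypothesis (indeed $|A| = c_i + 1 > c_i + c_j - c_j$), and because $x$ is the only box of $A$ in column $j$, every nonidentity coset representative sends $n$ into column $i$. One Garnir relation thus writes $e_T$ directly as a combination of tableaux with $n$ in column $i$, and a single column relation finishes. No induction on a poset, no self-referential cancellation, and no multi-column Garnir is needed.
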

In other words, in the notation of Proposition~\ref{prop-filtration}, $V_1 = S^D$.
\begin{proof}
	Assume without loss of generality that $B=\{b_1, \dots, b_k\}$ with $b_i = (i,i)$. Let $V_1$ be the span of $e_T$ for which $T_{ii} = n$ for some $i \leq k$, as in Proposition~\ref{prop-filtration}. We need to show that $e_T \in V_1$ for any injective tableau $T$. Clearly this holds if $T$ contains $n$ in one of the columns $1, \dots, k$ (using a one-column relation).
	
	Suppose then that $n$ lies in column $j>k$. The boxes in column $j$ form a rectangle, so they lie in some maximal rectangle $P$. If $B$ is an exact hitting set, then $P$ contains some element of $B$, say $b_i$. Then for any box in column $j$, there is a box in the same row in column $i$. Let $A$ be the set of boxes in column $i$ together with the box containing $n$. Then there is a two-column Garnir relation for the set $A$ that expresses $e_T$ as a linear combination of $e_{T'}$ where $n$ lies in column $i$ of $T'$ (or see Proposition~\ref{prop-move}). Thus $e_T \in V_1$.
\end{proof}

One consequence of Propositions~\ref{prop-hitting} and \ref{prop-v1} is that they can be used to give a necessary and sufficient condition for when a diagram $D$ branches off a single box.
\begin{prop} \label{prop-onebox}
	Let $D$ be a diagram and $x=(i,j) \in D$ any box. Then $\{x\}$ is a branching set for D if and only if every maximal rectangle of $D$ contains $x$. Equivalently, for any box $(i',j') \in D$, both $(i,j')$ and $(i',j)$ must lie in $D$.  
\end{prop}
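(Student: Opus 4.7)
First, I would verify that the two characterizations of the condition on $x$ are equivalent. If every maximal rectangle of $D$ contains $x$ and $(i',j')\in D$, then a maximal rectangle $P=P_1\times P_2$ of $D$ containing $(i',j')$ also contains $x$, forcing $i,i'\in P_1$ and $j,j'\in P_2$, so $(i,j'), (i',j)\in P\subset D$. Conversely, if the latter condition holds, then for any maximal rectangle $P=P_1\times P_2$ we have $(P_1\cup\{i\})\times P_2\subset D$ (each $j''\in P_2$ admits some $(i'',j'')\in P$, whence $(i,j'')\in D$ by hypothesis), and similarly $P_1\times(P_2\cup\{j\})\subset D$, so maximality forces $i\in P_1$ and $j\in P_2$, i.e., $x\in P$. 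The forward direction of the main iff is then immediate from Proposition~\ref{prop-hitting}: a branching set is an exact hitting set for the maximal rectangles, and a singleton exact hitting set must contain a box from every maximal rectangle.

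For the reverse direction, suppose every maximal rectangle of $D$ contains $x$. The set $\{x\}$ is trivially a special transversal and, by hypothesis, an exact hitting set, so Proposition~\ref{prop-v1} yields $V_1 = S^D$, and Proposition~\ref{prop-filtration} provides a surjective $\Sym_{n-1}$-equivariant map $\varphi_1\colon V_1 = S^D\twoheadrightarrow S^{D\setminus\{x\}}$. Applying Corollary~\ref{cor-subrep} with $B=\{x\}$ (so $V_2 = 0$) reduces the remaining goal to showing $\ker\varphi_1 = 0$.

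The main obstacle is thus the injectivity of $\varphi_1$. Both $V_1 = S^D$ and $S^{D\setminus\{x\}}$ are cyclic $\Sym_{n-1}$-modules, generated respectively by $e_{T_0}$ and $e_{T_0\setminus x}$ for any fixed injective tableau $T_0$ with $T_0(x)=n$, and $\varphi_1$ sends generator to generator; so injectivity is equivalent to showing that every relation $\alpha\cdot e_{T_0\setminus x}=0$ in $S^{D\setminus\{x\}}$ (for $\alpha\in\CC[\Sym_{n-1}]$) lifts to $\alpha\cdot e_{T_0}=0$ in $S^D$. I plan to analyze such relations via Proposition~\ref{prop-relations}: each is generated by a tableau $A$ of shape $D\setminus\{x\}$ such that $A\sigma$ is not row-strict for any $\sigma \in C_{D\setminus\{x\}}$. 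Extending $A$ to a tableau $A^+$ of shape $D$ by placing a fresh label at $x$ handles all $\sigma\in C_D$ fixing $x$ directly. The hardest case is $\sigma$ moving $x$ within the (now full) column $j$; here I would invoke Proposition~\ref{prop-move} together with the fullness of row $i$ to combine $A^+$ with a compatible generalized Garnir relation in $S^D$ involving column $j$, producing a relation whose image under $\varphi_1$ is precisely the original relation in $S^{D\setminus\{x\}}$. The principal technical point will be verifying that the column-$j$ obstructions cancel exactly against the auxiliary Garnir, rather than merely heuristically.
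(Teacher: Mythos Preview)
Your verification of the equivalence of the two conditions and the forward direction via Proposition~\ref{prop-hitting} are fine, and the reduction of the reverse direction to injectivity of $\varphi_1$ is the right framing. The gap is in your plan for injectivity. You write that every relation $\alpha\cdot e_{T_0\setminus x}=0$ in $S^{D\setminus\{x\}}$ ``is generated by a tableau $A$ of shape $D\setminus\{x\}$ such that $A\sigma$ is not row-strict for any $\sigma \in C_{D\setminus\{x\}}$'' and then appeal to Proposition~\ref{prop-relations}. But Proposition~\ref{prop-relations} only shows that such tableaux \emph{produce} relations; it does not say they \emph{generate} the full relation ideal, and the paper explicitly warns right after Proposition~\ref{prop-relations} that for general diagrams they do not. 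At this point in the paper nothing is known about $D\setminus\{x\}$ that would justify this---Theorem~\ref{thm-main}(b) is proved later and only for completely branching diagrams, which $D\setminus\{x\}$ is not assumed to be. So your lifting argument never gets off the ground: you would be lifting only a subset of the relations, which does not suffice to conclude $\alpha\cdot e_{T_0}=0$ for all $\alpha$ in the annihilator.

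The paper sidesteps this entirely. Instead of analyzing relations in $S^{D\setminus\{x\}}$, it works directly with an arbitrary element $\sum_T c_T e_T$ of $\ker\varphi_1$. The key structural input is that for every other box $y$ in column $j$, the hypothesis on $x$ forces the row through $y$ to be contained in the row through $x$; via Proposition~\ref{prop-mergerows}(b,c) this yields a surjection $(\overline\psi_y)_*\colon S^{D\setminus\{x\}}\twoheadrightarrow S^{D\setminus\{y\}}$. Applying these surjections to the image under $\varphi_1$ (which is zero) gives, after reinserting the label $n$, a family of vanishing expressions indexed by the boxes $y$ of column $j$. These are combined using the elementary identity
\[
\ov C(D)=\ov C(D\setminus\{x\})-\sum_{y}\psi_y^{-1}\,\ov C(D\setminus\{y\}),
\]
together with the fact that $R(D\setminus\{x\})$ and each $R(D\setminus\{y\})$ are left factors of $R(D)$, to conclude $\sum_T c_T e_T=0$ in $S^D$. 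The argument never touches the relation ideal of $S^{D\setminus\{x\}}$ at all, which is exactly what your approach would need but cannot supply.
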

\begin{proof}
	One direction follows immediately from Proposition~\ref{prop-hitting}.	For the other direction, suppose every maximal rectangle of $D$ contains $x$. By Proposition~\ref{prop-v1}, $S^D$ is spanned by $e_T$ where $T_{x}=n$. Given such a tableau $T$, let $\varphi(T)$ be the tableau of shape $D \backslash \{x\}$ obtained by removing box $x$. By Proposition~\ref{prop-filtration}, we need to show that the map $\varphi\colon S^D \to S^{D \backslash \{x\}}$ that sends $e_T \mapsto e_{\varphi(T)}$ is injective.
	
	For any box $y = (i',j) \neq x$ in column $j$ of $D$, note that $(i',j')\in D$ implies $(i,j')\in D$. Hence if $A$ is the tableau of shape $D \backslash \{x\}$ for which $A_y = i$ and all other boxes are labeled by their row, then $A$ is the unique column-strict tableau of the form $A\sigma$ for $\sigma \in R_{D \backslash \{x\}}$. Let $\psi_y \colon D \to D$ be the transposition switching $x$ and $y$. Then $\psi_y$ restricts to a bijection $\overline \psi_y\colon D \backslash \{x\} \to D \backslash \{y\}$ that, by Proposition~\ref{prop-mergerows}, induces a surjection $(\overline\psi_y)_*\colon S^{D \backslash \{x\}} \twoheadrightarrow S^{D \backslash \{y\}}$. 
	
	Now suppose $\sum_T c_Te_T \in \ker \varphi$, so that
	\[0 = \varphi\left(\sum_T c_Te_T\right) = \sum_T c_T e_{\varphi(T)}=\sum_T c_T \varphi(T) \cdot \ov C(D \backslash \{x\})R(D \backslash \{x\}) \in S^{D \backslash \{x\}}.\]
	Inserting box $x$ with label $n$ back into each of these tableaux, we find that \[\sum_T c_T T \cdot \ov C(D \backslash \{x\}) R(D \backslash \{x\}) = 0. \tag{$\dagger$}\]
	Similarly,
	\[0 = (\ov \psi_y)_*\left(\sum_T c_Te_{\varphi(T)}\right) = \sum_T c_T e_{\varphi(T)\ov\psi_y^{-1}} = \sum_T c_T (\varphi(T) \ov\psi_y^{-1}) \ov C(D \backslash \{y\})R(D \backslash \{y\}) \in S^{D \backslash \{y\}}.\]
	Inserting box $y$ with the label $n$ into all the terms on the right, we must also get 0, so
	\[\sum_T c_T(T \psi_y^{-1}) \ov C(D \backslash \{y\}R(D \backslash \{y\}) = 0\tag{$\ddagger$}\]
	Since \[\ov C(D) = \ov C(D \backslash \{x\}) - \sum_y \psi_y^{-1} \cdot \ov C(D \backslash \{y\}),\]
	\begin{align*}
		\sum_T c_Te_T &= \sum_T c_T T\cdot \ov C(D)R(D)\\
		& = \sum_T c_T T \cdot \ov C(D \backslash \{x\})R(D) - \sum_T \sum_y c_T(T \psi_y^{-1})\ov C(D \backslash \{y\})R(D).
	\end{align*}
	But $R(D \backslash \{x\})$ and $R(D \backslash \{y\})$ are both left factors of $R(D)$, so equations $(\dagger)$ and $(\ddagger)$ imply $\sum_T c_Te_T = 0$, as desired.
\end{proof}
One can also prove Proposition~\ref{prop-onebox} using the box-complementation symmetry of Specht modules shown by Magyar \cite{Magyar}.

\subsection{Complete branching}
We are now ready to prove our main theorem giving a criterion for when a diagram branches completely.
\begin{thm} \label{thm-main}
Let $D$ be a diagram, and let $B \subset D$ be a special transversal.
\begin{enumerate}[(a)]
\item The diagram $D$ branches completely with respect to $B$ if and only if $D \backslash \{b\}$ branches completely for all $b \in B$, and $B$ is an exact hitting set for the maximal rectangles of $D$.
\item If $D$ branches completely, then the generalized Garnir relations generate all the relations in $S^D$ (together with the one-column relations).
\end{enumerate}
\end{thm}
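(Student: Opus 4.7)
The plan is to prove both parts simultaneously by induction on $n = |D|$. The base case $n \leq 1$ is trivial, and the ``only if'' direction of (a) follows immediately from Proposition~\ref{prop-hitting} together with the definition of complete branching.

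For the inductive step, assume both parts hold for all diagrams with fewer than $n$ boxes, let $D$ have $n$ boxes, and let $B = \{b_1, \dots, b_k\}$ be a special transversal of $D$ that is an exact hitting set for its maximal rectangles such that each $D \backslash \{b_i\}$ branches completely. Proposition~\ref{prop-v1} gives $V_1 = S^D$, so by Corollary~\ref{cor-subrep} it suffices to prove $\ker \varphi_i \subseteq V_{i+1}$ for each $i$. Given $x \in V_i$ with $\varphi_i(x) = 0$, write $x = x_i + y$ with $x_i = \sum_{T\colon T_{b_i} = n} c_T e_T$ and $y \in V_{i+1}$; then $\varphi_i(x_i) = \sum c_T e_{\varphi_i(T)} = 0$ in $S^{D \backslash \{b_i\}}$. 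By the inductive hypothesis on (b), this relation is a consequence of one-column and generalized Garnir relations of $D \backslash \{b_i\}$. A lifting lemma then asserts that reinserting $b_i$ with label $n$ turns each such consequence into a consequence of one-column and generalized Garnir relations of $D$ modulo elements of $V_{i+1}$, forcing $x_i \in V_{i+1}$ and hence $x \in V_{i+1}$. This establishes branching of $D$ with respect to $B$, completing the inductive step for (a).

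For part (b), once branching is established I would run the same filtration argument on an arbitrary relation $\sum c_T e_T = 0$ in $S^D$: first reduce to terms with $n$ in some $b_j$ via Proposition~\ref{prop-v1} (which uses only two-column Garnir relations, a special case of the generalized ones), then apply $\varphi_1$ to extract the $b_1$-layer relation in $S^{D \backslash \{b_1\}}$, invoke the inductive hypothesis on (b) there to express it as a combination of generalized Garnir and one-column relations, lift back to $D$ via the lifting lemma, and iterate on the layers $i = 2, \dots, k$.

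The hard part is the lifting lemma. When the columns $j_1, \dots, j_{m+1}$ involved in a generalized Garnir relation for $D \backslash \{b_i\}$ do not include column $i$, the relation lifts verbatim, since removing $b_i$ affects neither the column lengths $c_{j_l}$ nor the number $c$ of rows common to those columns. When column $i$ is among the $j_l$, the size condition in $D$ is strictly harder than in $D \backslash \{b_i\}$ and the naive lift fails; however, augmenting the subset of $A$ in column $i$ by adjoining $b_i$ produces a valid generalized Garnir relation in $D$. Decomposing the augmented relation according to where $b_i$ is sent by the enlarged stabilizer and using Proposition~\ref{prop-move} together with one-column relations to shuttle the stray terms (those in which the label $n$ has moved off $b_i$) into forms compatible with the filtration is the technical heart of the argument; the bookkeeping must be done uniformly enough that it produces no new identities beyond those already guaranteed in $S^D$.
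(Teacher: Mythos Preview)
Your overall architecture matches the paper's exactly: simultaneous induction on $n$, one direction of (a) from Proposition~\ref{prop-hitting}, $V_1 = S^D$ from Proposition~\ref{prop-v1}, and reduction of $\ker\varphi_i \subseteq V_{i+1}$ to lifting a single generalized Garnir relation from $D\setminus\{b_i\}$ to $D$ modulo $V_{i+1}$. The separate filtration pass you propose for part (b) is not needed---the paper simply observes that every manipulation used in the lifting step is itself a one-column or generalized Garnir relation, so (b) falls out of the same argument.

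There is, however, a genuine gap in your lifting lemma. You correctly note that when column $i$ appears among $j_1,\dots,j_{m+1}$, adjoining $b_i$ to the relevant $A_p$ yields a valid Garnir relation in $D$, and that Proposition~\ref{prop-move} can then move the label $n$ from $b_i$ to any other box of $\bigcup A_q$. But Proposition~\ref{prop-move} only moves $n$ \emph{within} the columns $j_1,\dots,j_{m+1}$ already present. If none of those columns equals some $i'$ with $i<i'\leq k$, the stray terms land with $n$ in a column containing no $b_{i'}$ with $i'>i$, and neither Proposition~\ref{prop-move} nor one-column relations will push them into $V_{i+1}$. This is exactly the case the paper isolates, and it is where the exact hitting set hypothesis is used a \emph{second} time (beyond $V_1=S^D$): the $c$ rows meeting all of $j_1,\dots,j_{m+1}$ form a rectangle, hence lie in a maximal rectangle $P$; since $c=c'$ forces $b_i\notin P$, the hitting set gives some $b_{i'}\in P$, and the special transversal order forces $i'>i$. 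One then \emph{enlarges} the relation by appending the new set $A_{m+1}'' = C_{i'}\cup\{y\}$ to obtain an $(m+2)$-column Garnir relation $A''$, and only then does Proposition~\ref{prop-move} succeed in moving $n$ to column $i'$. Your sketch never introduces a new column, so as written it cannot close this case.
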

\begin{proof}
We prove both statements by induction on the number of boxes $n=|D|$. One direction of part (a) follows from Proposition~\ref{prop-hitting}, so suppose that $B$ is a special transversal as well as an exact hitting set for the maximal rectangles of $D$ and that $D\backslash \{b\}$ branches completely for all $b \in B$. We need to show that $D$ branches with respect to $B$ and that the relations in $D$ are generated by generalized Garnir relations. 

Assume without loss of generality that $B = \{b_1, \dots, b_k\}$ with $b_i = (i,i)$. Using the notation of Proposition~\ref{prop-filtration}, we need to show that $V_1 = S^D$ and $V_{i+1} = \ker \varphi_i$. The first equality holds from Proposition~\ref{prop-v1}.

Suppose $\sum_T c_Te_T \in \ker \varphi_i$, where $T$ ranges over tableaux with $T_{ii}=n$. We need to show that $\sum_T c_Te_T \in V_{i+1}$. By the inductive hypothesis, the relations in $S^{D \backslash \{b_i\}}$ are generated by the generalized Garnir relations, so it suffices to prove the claim for $T \cdot \ov \Stab(A) \ov C(D) R(D)$, where $T$ is a tableau of shape $D$ with $T_{ii}=n$, and $A=(A_1, \dots, A_m)$ gives a generalized Garnir relation in $D \backslash \{b_i\}$. Let $j_1, \dots, j_{m+1}$ be the columns involved in this relation, and let column $j$ have $c_j$ boxes in $D \backslash \{b_i\}$ and $c_j'$ boxes in $D$. (Hence $c_j' = c_j$ for $j \neq i$, and $c_i' = c_i+1$.) Let $c$ be the number of rows of $D \backslash \{b_i\}$ containing boxes in all of columns $j_1, \dots, j_{m+1}$, and similarly define $c'$ for $D$.

If $A$ also defines a Garnir relation in $D$, then $\ov \Stab(A) \ov C(D) R(D)$ will vanish, so we will be done. Since we know that $A$ defines a Garnir relation in $D \backslash \{b_i\}$, we have
\[\sum_{q=1}^m |A_q| > \sum_{p=1}^{m+1} c_{j_p} - c.\]
When we pass from $D\backslash \{b_i\}$ to $D$, the right hand side of the inequality above can only increase by 1, so $A$ will not define a Garnir relation in $D$ if and only if $j_p=i$ for some $p$, $c=c'$, and
\[\sum_{q=1}^m |A_q| = \sum_{p=1}^{m+1} c'_{j_p} - c,\]
so let us assume this is the case.

A similar argument shows that $A\cup\{b_i\}=(A_1, \dots, A_p \cup \{b_i\}, \dots, A_m)$ will always give a Garnir relation in $D$. If $A$ contains some box in column $i'$ where $i<i'\leq k$, then applying Proposition~\ref{prop-move} to $A\cup\{b_i\}$ gives that $T \cdot \ov \Stab(A)\ov C(D)R(D)$ either vanishes or is a multiple of $T\pi \cdot \ov \Stab(A') \ov C(D)R(D)$ for some $A'$ and some $\pi$, where $\pi^{-1}$ maps $b_i$ into column $i'$. But then $T\pi$ is a tableau with the label $n$ in column $i'$, so $T\pi \cdot \ov \Stab(A')\ov C(D)R(D) \in V_{i+1}$, as desired.

Therefore, we may assume that $A$ does not contain any boxes in columns $i+1, \dots, k$. Choose any box $y$ of $A$ in column $j_{m+1}$, and use Proposition~\ref{prop-move} to write $T \cdot \ov \Stab(A) \ov C(D)R(D)$ as a scalar multiple of $T\pi \cdot \ov \Stab(A') \ov C(D)R(D)$, where $T\pi$ is a tableau with $n$ in column $j_{m+1}$ and $(\bigcup A) \cup \{b_i\} = (\bigcup A') \cup \{y\}$.

Consider the $c$ rows that intersect all of the columns $j_1, \dots, j_{m+1}$. The intersection of these rows and columns forms a rectangle which must be contained in some maximal rectangle $P$ in $D$. Since $P$ is a maximal rectangle, it must contain an element of $B$. But since $c=c'$, $b_i \not \in P$, so some other element $b_{i'}$ must lie in $P$. We cannot have $i'<i$, for then $(i',i) = (i', j_p)$ could not lie in $P$ by the definition of a special transversal. Hence we must have $i'>i$. Now let $C_{i'}$ be the set of boxes in column $i'$, and consider $A'' = (A_1', \dots, A_m', C_{i'} \cup \{y\})$. By our choice of $i'$, there are still exactly $c$ rows that intersect all of the columns $j_1, \dots, j_{m+1}, i'$.
Then
\[\sum_{q=1}^{m+1} |A_q''| = \sum_{q=1}^m |A_q| + |C_{i'}|+1 = \sum_{p=1}^{m+1} c_{j_p}'-c+c'_{i'}+1 > \sum_{p=1}^{m+1} c_{j_p}'+c'_{i'}-c.\]
Thus $A''$ gives a generalized Garnir relation for $D$. Applying Proposition~\ref{prop-move} to $A''$ using $y$ and $b_{i'}$ gives
\[(T\pi) \cdot \ov \Stab(A'' \backslash \{y\}) \ov C(D)R(D) \in V_{i'} \subset V_{i+1}.\]
Since $A'' \backslash \{y\} = (A_1' \dots, A_m', C_{i'})$ and $\Stab_{C_{i'}} \subset C_D$, it follows that $\ov \Stab(A'' \backslash \{y\}) \ov C(D)$ is just a scalar multiple of $\ov \Stab(A') \ov C(D)$. Hence $(T\pi) \cdot \ov \Stab(A') \ov C(D) R(D) \in V_{i+1}$, and therefore $T \cdot \ov \Stab(A) \ov C(D)R(D) \in V_{i+1}$, as desired.

Since Proposition~\ref{prop-move} uses only one-column and generalized Garnir relations, these relations generate all the relations in $S^D$.
\end{proof}

Note that we have shown something slightly stronger, namely that the converse of Proposition~\ref{prop-hitting} is true for a special transversal $B$ as long as all the relations in $S^{D \backslash \{x\}}$ are generated by generalized Garnir relations for all $x \in B$.

Theorem~\ref{thm-main} can be used to iteratively construct all completely branching diagrams: if all completely branching diagrams with $n$ boxes are known, then one can check whether a diagram with $n+1$ boxes branches completely by checking for the existence of a branching set using Theorem~\ref{thm-main}(a) without using any algebraic structure.

\subsection{Straightening}

The proof of Theorem~\ref{thm-main} (together with the proof of Proposition~\ref{prop-move}) implicitly gives a straightening rule for any diagram $D$ that branches completely. More precisely, suppose that we wish to write some $e_T$ in terms of the basis $\{e_{T_{\mathcal C}}\}$ as described in Proposition~\ref{prop-basis}, and suppose that $T$ contains the label $n$ in box $x$. Theorem~\ref{thm-main} then tells us to perform the following procedure to straighten it:
\begin{itemize}
	\item If $x \not\in B(D) = \{b_1, \dots, b_k\}$, then as in Proposition~\ref{prop-v1} there exists some $i$ such that the column containing $x$ (as a subset of $\mathbf N$) is a subset of the column containing $b_i$. Then there is a two-column (or one-column) Garnir relation writing $e_T$ in terms of $e_{T'}$ where $T'$ contains the label $n$ in box $b_i$. Thus we may assume $x \in B(D)$.
	\item If $x = b_i$, then write $\varphi_i(T)$ for the tableau of shape $D \backslash \{b_i\}$ obtained by removing the box $b_i$ containing $n$ from $T$. By induction, $e_{\varphi_i(T)}$ can be written as a linear combination of $e_{\varphi_i(T_{\mathcal C})}$ in $S^{D \backslash \{b_i\}}$ using $(m+1)$-column generalized Garnir relations of the form $\ov\Stab(A) \ov C(D \backslash \{b_i\})R(D \backslash \{b_i\})$. Hence $e_T$ differs from a linear combination of $e_{T_\mathcal C}$ by terms of the form $\ov\Stab(A) \ov C(D)R(D)$, so it suffices to straighten expressions of this form.
	\item If $A$ also defines a generalized Garnir relation in $D$, then $\ov\Stab(A) \ov C(D)R(D)$ vanishes. Otherwise, either $A$ involves a column containing $b_{i'}$ with $i'>i$ or it doesn't. 
	\item If it does, then Proposition~\ref{prop-move} writes $\ov\Stab(A) \ov C(D)R(D)$ in terms of tableaux containing $n$ in box $b_{i'}$ using $(m+1)$-column Garnir relations.
	\item Otherwise, the maximal rectangle intersecting all the columns of $A$ contains some $b_{i'}$ with $i'>i$, and then there exists an $(m+2)$-column Garnir relation to write $\ov\Stab(A) \ov C(D)R(D)$ in terms of tableaux containing $n$ in $b_{i'}$.
\end{itemize}
Note that the label $n$ always moves from $b_i$ to $b_{i'}$ with $i'>i$, so this straightening procedure always terminates.

\begin{ex}
	We give an example of one step of the straightening procedure. Consider the following diagram $D$ with branching set $B = \{b_1,b_2,b_3\}$ as indicated. 
	\[D = \yts{\none\none {b_3},{b_1},\none {b_2}}*{3,1,1+1}*[*(lightgray)]{2+1,1,1+1}.\]
	Assume that we have chosen for $S^{D \backslash \{b_2\}}$ the usual basis $\{e_T\}$ where $T$ is a standard Young tableau (with increasing rows and columns from left to right and top to bottom). Consider the element $e_T$, where \[T = \yts{214,3,\none 5}\,.\]
	If we were to remove box $b_2$ containing 5, then the resulting tableau can be straightened in $S^{D \backslash \{b_2\}}$ using a Garnir relation as follows (using $T$ to represent $e_T$ for ease of notation):
	\[\yts{214,3}*[*(yellow)]{2,1} = \yts{124,3}*[*(yellow)]{2,1}-\yts{134,2}*[*(yellow)]{2,1}\]
	If we add back in box $b_2$, the result is not a relation in $S^D$. However, the maximal rectangle intersecting the two columns involved in this relation contains $b_3$. Hence	we can use a 3-column Garnir relation to write
	\[\yts{214,3,\none5}*[*(yellow)]{2,1}*[*(orange)]{2+1,0,1+1} = \yts{124,3,\none5}*[*(yellow)]{2,1}*[*(orange)]{2+1,0,1+1}
	-\yts{134,2,\none5}*[*(yellow)]{2,1}*[*(orange)]{2+1,0,1+1}
	+\yts{215,3,\none4}*[*(yellow)]{2,1}*[*(orange)]{2+1,0,1+1} -\yts{125,3,\none4}*[*(yellow)]{2,1}*[*(orange)]{2+1,0,1+1}
	+\yts{135,2,\none4}*[*(yellow)]{2,1}*[*(orange)]{2+1,0,1+1}\,.\]
	The first two terms on the right hand side are now basis elements, while the last three terms have the largest label in $b_3$ rather than $b_2$.
	\end{ex}

\subsection{Relation to previous results}

Using Theorem~\ref{thm-main}, we can give simple, combinatorial proofs that northwest diagrams and forest diagrams branch completely. The two corollaries below are restatements of Theorem~\ref{thm-northwest} and Theorem~\ref{thm-forest}.

Recall that it was shown in \cite{ReinerShimozono2} that every northwest diagram can have its rows rearranged to be in initial segment order.
\begin{cor}
	Let $D$ be a northwest diagram with its rows arranged in initial segment order. Let $B$ be the set of boxes $x$ such that $x$ is bottommost in its column, and no box $y$ that is bottommost in its column lies to the left of $x$ (in the same row). Then $D$ branches completely with respect to $B$.
\end{cor}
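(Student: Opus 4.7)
The plan is to apply Theorem~\ref{thm-main}(a) by induction on $|D|$, verifying that $B$ is a special transversal, that $D\backslash\{b\}$ branches completely for each $b\in B$, and that $B$ is an exact hitting set for the maximal rectangles of $D$. The special-transversal condition is immediate: order $B$ by strictly decreasing row index, and note that for $p<q$ the requirement $(i_p,j_q)\notin D$ is forced by $b_q=(i_q,j_q)$ being bottommost in column $j_q$. For the inductive step I observe that removing $b=(i,j)\in B$ preserves the northwest property, since any new violation would require a box $(i_2,j)\in D$ with $i_2>i$, contradicting $b$ bottommost in column $j$; the Reiner--Shimozono rearrangement then puts $D\backslash\{b\}$ in initial segment order without changing its Specht module, and induction applies.

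For the ``at most one $B$-box per rectangle'' half of the hitting-set condition: if $b=(i,j)$ and $b'=(i',j')$ were two elements of $B$ with $i<i'$ lying in a common rectangle, then $(i',j)\in D$ would contradict $b$ bottommost in column $j$. For the ``at least one $B$-box per maximal rectangle'' half, fix a maximal rectangle $P=P_1\times P_2$ with bottom row $i^*=\max P_1$; I will reduce to the sub-claim that some $j\in P_2$ has $(i^*,j)$ bottommost in column $j$. Given this, the leftmost bottommost-in-column box $(i^*,j_1)$ of row $i^*$ belongs to $B$, and a standard northwest-plus-maximality argument forces $j_1\in P_2$ (otherwise adding column $j_1$ would extend $P$), so $(i^*,j_1)\in B\cap P$.

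The main obstacle is the sub-claim, which I plan to prove by contradiction. Assume every $j\in P_2$ admits a box in column $j$ strictly below row $i^*$. Let $i^\dagger$ be the smallest row greater than $i^*$ with a box in some column of $P_2$, and let $J\subseteq P_2$ consist of those columns in which row $i^\dagger$ has a box. Maximality of $P$ forces $J\subsetneq P_2$, so pick $j^+=\min(P_2\backslash J)$ and, by the failure hypothesis, a witness $(i',j^+)\in D$ with $i'>i^*$; the minimality of $i^\dagger$ then forces $i'>i^\dagger$. Two northwest applications drive the contradiction. First, row $i^\dagger$ has no columns outside $P_2$: columns below $\min P_2$ or strictly between columns of $P_2$ would extend $P$ by the standard northwest-extension trick, while a column $c>\max P_2$ in row $i^\dagger$ paired with $(i',j^+)$ triggers northwest to give $(i^\dagger,j^+)\in D$, contradicting $j^+\notin J$. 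Second, $J$ must be an initial segment of $P_2$: otherwise some $j_0\in J$ satisfies $j_0>j^+$, and northwest applied to $(i^\dagger,j_0)$ and $(i',j^+)$ again yields $(i^\dagger,j^+)\in D$. Since the same extension argument shows that, in increasing order, the columns of row $i^*$ begin with all of $P_2$, row $i^\dagger=J$ is a proper initial segment of row $i^*$; the initial segment order then forces $i^\dagger<i^*$, contradicting $i^\dagger>i^*$ and completing the sub-claim.
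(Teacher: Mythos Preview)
Your proof is correct and follows essentially the same strategy as the paper: induct via Theorem~\ref{thm-main}(a), verify the special-transversal condition by ordering $B$ from bottom to top, and check the exact hitting-set condition using the northwest property together with the initial-segment-order hypothesis. The only organizational difference is in the ``at least one'' direction: the paper directly picks, among the bottommost boxes $x_j$ of the columns $j\in P_2$, the one with minimum row (then minimum column), shows it lies in $B$, and then derives a contradiction (either an extension of $P$ or a proper-initial-segment violation) unless it lies in $P$; you instead first prove the sub-claim that some column of $P_2$ has its bottommost box in row $i^*=\max P_1$ by analyzing the auxiliary row $i^\dagger$, and only then locate the $B$-element. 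Both routes use the same two ingredients---northwest extension contradicting maximality, and a proper initial segment below $i^*$ contradicting the row order---so the arguments are equivalent in substance, with the paper's version being slightly more direct.
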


\begin{proof}
	Since any $x \in B$ is bottommost in its column, removing it will preserve the northwest property. Therefore by induction and Theorem~\ref{thm-main}, it suffices to check that $B$ is a special transversal and every maximal rectangle of $D$ intersects $B$. Denote the boxes in $B$ by $b_1, \dots, b_k$ from bottom to top. Since each box of $B$ lies in a different row and each is bottommost in its column, this ordering shows that $B$ is a special transversal.
	
	Consider any maximal rectangle $P = P_1 \times P_2$. Note that $D$ cannot contain any box $(i,j)$ with $i \geq \max(P_1)$ and $j < \max (P_2)$ unless $j \in P_2$, for otherwise using the northwest property with $(i,j)$ and each box in the last column of $P$ would imply that we could add $j$ to $P_2$, contradicting maximality.
	
	For any column $j$, let $x_j$ be the bottommost box in column $j$. Among all $x_j$ for $j \in P_2$, let $y = x_c = (r,c)$ be the one with $r$ minimum and, among those, the one with $c$ minimum. By the previous paragraph, $y$ cannot have any other $x_j$ directly to its left, so $y \in B$.
	
	In fact, $y$ also lies in $P$: suppose the rightmost box in row $r$ lies in column $d$. By above, either $d\geq \max (P_2)$ or $d \in P_2$. If $d \geq \max (P_2)$, then using the northwest property with all $x_j$ for $j \in P_2$ implies that we could add row $r$ to $P_1$, contradicting maximality of $P$. If instead $d \in P_2$, then the northwest property with all $x_j$ for $j \in P_2$ and $j < d$ implies that row $r$ is an initial segment of row $\max (P_1)$, contradicting initial segment order. It follows that $y \in B \cap P$, as desired.
\end{proof}

Recall that it was shown in \cite{Liu} that every forest has an almost perfect matching.
\begin{cor}
	Let $D$ be a forest diagram, and let $B$ be the set of boxes corresponding to any almost perfect matching of $G(D)$. Then $D$ branches completely with respect to $B$.
\end{cor}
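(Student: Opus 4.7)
My plan is to prove this by induction on $|D|$, reducing to a verification of the two hypotheses of Theorem~\ref{thm-main}(a): that $B$ is an exact hitting set for the maximal rectangles of $D$, and that $D \setminus \{b\}$ branches completely for every $b \in B$. For the inductive step, note that removing a box from $D$ corresponds to deleting the corresponding edge in $G(D)$, and a forest minus an edge is still a forest. Hence $D \setminus \{b\}$ is a forest diagram, and by the result from \cite{Liu} cited just before the corollary, $G(D \setminus \{b\})$ admits an almost perfect matching, so the induction hypothesis applies. It remains to check that $B$ is a special transversal and an exact hitting set.

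The crucial structural observation is a classification of maximal rectangles in a forest diagram. If $P = P_1 \times P_2 \subseteq D$ with $|P_1| \geq 2$ and $|P_2| \geq 2$, then the four corresponding boxes give a $4$-cycle in $G(D)$, contradicting the fact that $G(D)$ is a forest. Therefore every rectangle in $D$ has $|P_1| = 1$ or $|P_2| = 1$. Checking maximality row-by-row and column-by-column, the maximal rectangles of $D$ are exactly: (i) entire rows $\{i\} \times \mathrm{row}(i)$ for each $v_i$ of degree $\geq 2$ in $G(D)$; (ii) entire columns $\mathrm{col}(j) \times \{j\}$ for each $w_j$ of degree $\geq 2$; and (iii) singleton boxes $\{(i,j)\}$ for isolated edges of $G(D)$ (where both $v_i$ and $w_j$ have degree $1$).

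The exact hitting set property now follows immediately from the definition of an almost perfect matching $M$: for each vertex $v_i$ of degree $\geq 2$, $M$ contains exactly one edge incident to $v_i$, which translates to exactly one box of $B$ lying in row $i$; symmetrically for each column with $w_j$ of degree $\geq 2$; and every isolated edge of $G(D)$ lies in $M$, contributing its unique box to $B$ and thus hitting the corresponding singleton rectangle exactly once. Finally, $B$ is a special transversal by Proposition~\ref{prop-alternating}: $M$ is a matching (each vertex appears in at most one edge, directly from the definition), and $G(D)$ is a forest so it contains no cycles whatsoever, in particular no $M$-alternating cycles. With both conditions verified, Theorem~\ref{thm-main}(a) yields that $D$ branches completely with respect to $B$. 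The only part that required real content was the classification of maximal rectangles, and even this is forced quickly by acyclicity; the rest is a bookkeeping translation between the matching language and the rectangle language.
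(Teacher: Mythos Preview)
Your proof is correct and follows essentially the same approach as the paper's. Both arguments invoke Theorem~\ref{thm-main}(a) inductively, check that $B$ is a special transversal via Proposition~\ref{prop-alternating} (forests have no cycles), classify maximal rectangles of a forest diagram as full rows or columns at vertices of degree at least $2$ together with isolated edges, and observe that exact hitting sets then coincide with almost perfect matchings; the only cosmetic difference is that the paper phrases the classification via maximal bicliques of $G(D)$ while you argue directly with the $4$-cycle obstruction.
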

\begin{proof}
	By Proposition~\ref{prop-alternating}, $B$ is a special transversal since $G(D)$ has no cycles. Maximal rectangles in $D$ correspond to maximal bicliques (induced complete bipartite subgraphs) of $G(D)$. In a forest, the maximal bicliques are either isolated edges or the set of edges incident to a vertex of degree greater than 1. Thus, exact hitting sets correspond precisely to almost perfect matchings. Since removing any box from a forest diagram results in a forest diagram, the result follows from Theorem~\ref{thm-main} by induction.
\end{proof}

\subsection{The class of completely branching diagrams}

Let $\mathcal B$ be the class of completely branching diagrams. By Theorem~\ref{thm-main}, $D \in \mathcal B$ if and only if there exists a special transversal $B$ that is an exact hitting set for the maximal rectangles of $D$, and $D \backslash \{x\} \in \mathcal B$ for all $x \in B$. We will now discuss some properties of $\mathcal B$.

First, we show that $\mathcal B$ is closed under taking \emph{induced subdiagrams}, that is, diagrams formed by restricting to a smaller set of rows and columns (or equivalently, by intersecting with a rectangle).
\begin{prop} \label{prop-subdiagram}
	Let $D \in \mathcal B$ be a completely branching diagram, and let $E \subset D$ be an induced subdiagram. Then $E \in \mathcal B$.
\end{prop}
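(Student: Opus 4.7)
The plan is to induct on $|D|$, writing $E = D \cap (R \times C)$ for the rows $R$ and columns $C$ that define the induced subdiagram. By Theorem~\ref{thm-main}(a), I would fix a special transversal $B \subset D$ that is an exact hitting set for the maximal rectangles of $D$, with $D \setminus \{b\} \in \mathcal B$ for each $b \in B$. If some $b \in B$ lies outside $E$, then $b \notin R \times C$, so $E = (D \setminus \{b\}) \cap (R \times C)$ is an induced subdiagram of the strictly smaller completely branching diagram $D \setminus \{b\}$, and the inductive hypothesis immediately gives $E \in \mathcal B$.

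The remaining case is $B \subset E$, and here I claim $B$ itself witnesses $E \in \mathcal B$ via Theorem~\ref{thm-main}(a). The special transversal property is inherited from $D$: any ordering $b_1, b_2, \ldots$ with $(i_p, j_q) \notin D$ for $p < q$ automatically satisfies $(i_p, j_q) \notin E$. For each $x \in B$, since $x \in E$ we have $E \setminus \{x\} = (D \setminus \{x\}) \cap (R \times C)$, an induced subdiagram of $D \setminus \{x\} \in \mathcal B$, and the inductive hypothesis gives $E \setminus \{x\} \in \mathcal B$. The main content is to show $B$ is an exact hitting set for the maximal rectangles of $E$. The key observation is that every maximal rectangle $Q$ of $E$ has the form $Q = P \cap (R \times C)$ for some maximal rectangle $P$ of $D$: extending $Q$ to a maximal rectangle $P$ of $D$, the set $P \cap (R \times C) \subset E$ is a rectangle containing $Q$, hence equals $Q$ by maximality in $E$. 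Then the unique $b \in B \cap P$ lies in $R \times C$ (because $B \subset E$), so $b \in P \cap (R \times C) = Q$; conversely any two elements of $B \cap Q$ both lie in a common extending maximal rectangle $P$ of $D$ and so must coincide.

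The main obstacle is the exact-hitting verification in the $B \subset E$ case, which pivots on the max-rectangle correspondence $Q = P \cap (R \times C)$ combined with the fact that every element of $B$ automatically lies in $R \times C$. Once these are in place, Theorem~\ref{thm-main}(a) delivers $E \in \mathcal B$ and the induction closes.
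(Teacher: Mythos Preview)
Your proof is correct and follows essentially the same approach as the paper's: the paper phrases it as a minimal counterexample argument while you use direct induction on $|D|$, but the logical content is identical---reduce to a smaller diagram if some $b\in B$ lies outside $E$, and otherwise verify that $B$ itself is a branching set for $E$ via the observation that every maximal rectangle of $E$ arises as $P\cap(R\times C)$ for some maximal rectangle $P$ of $D$. Your write-up is slightly more explicit in justifying the exact hitting set condition; the ``at most one'' direction can be shortened by noting that a special transversal meets any rectangle (in $D$, hence in $E$) in at most one box, but your extension-to-$P$ argument is also fine.
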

\begin{proof}
	For the sake of contradiction, choose $D \in \mathcal B$ to be minimal such that there exists some subdiagram $E \subset D$ with $E \not\in \mathcal B$. Since $D$ branches completely, it has some branching set $B$. Then we must have $B \subset E$, for if $x \in B$ but $x \not \in E$, then $D \backslash \{x\} \in \mathcal B$ contains $E$ as an induced subdiagram, contradicting minimality of $D$.
	
	By Theorem~\ref{thm-main}, $B$ is a special transversal and an exact hitting set for the maximal rectangles of $D$. Since every maximal rectangle in $E$ can be obtained as the intersection of a maximal rectangle of $D$ with $E$, it follows that $B$ is also a special transversal and exact hitting set for the maximal rectangles of $E$. But since $E$ does not branch completely with respect to $B$, we must have that for some $x \in B$, $E \backslash \{x\}$ does not branch completely. But $D \backslash \{x\}$ does branch completely, so this contradicts the minimality of $D$.
\end{proof}

We next relate $\mathcal B$ to another class of diagrams, namely that of \emph{$\Gamma$-freeable diagrams}.

\begin{defn}
	A diagram $D$ is \emph{$\Gamma$-free} if there do not exist $i_1 < i_2$ and $j_1< j_2$ such that $(i_1, j_1), (i_1, j_2), (i_2, j_1) \in D$ but $(i_2, j_2) \not\in D$. A diagram is \emph{$\Gamma$-freeable} if it is equivalent to a $\Gamma$-free diagram.
\end{defn}
There is a simple graph-theoretic characterization of $\Gamma$-freeable diagrams in terms of \emph{chordal bipartite graphs}.
\begin{defn}
	A graph $G$ is \emph{chordal bipartite} if it is bipartite and contains no induced cycle of length greater than 4.
\end{defn}
In fact, a diagram $D$ is $\Gamma$-freeable if and only if $G(D)$ is chordal bipartite. See \cite{Spinrad} for more information.

Using Proposition~\ref{prop-subdiagram}, we can easily relate complete branching to $\Gamma$-freeable diagrams.

\begin{prop} \label{prop-gammafree}
	Any completely branching diagram $D \in \mathcal B$ is $\Gamma$-freeable.
\end{prop}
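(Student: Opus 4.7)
The plan is to argue the contrapositive: if $D$ is not $\Gamma$-freeable then $D \notin \mathcal{B}$. By the characterization recalled above, failure of $\Gamma$-freeability means that $G(D)$ is not chordal bipartite, so it contains an induced cycle of length $2k$ for some $k \geq 3$, on vertices $v_{i_1}, w_{j_1}, v_{i_2}, w_{j_2}, \dots, v_{i_k}, w_{j_k}$. Let $C$ be the induced subdiagram of $D$ on rows $\{i_1,\dots,i_k\}$ and columns $\{j_1,\dots,j_k\}$. Because the cycle is induced, $C$ consists of exactly the $2k$ boxes $(i_l, j_{l-1})$ and $(i_l, j_l)$ for $l = 1,\dots,k$ (indices mod $k$), and $G(C)$ is precisely this $2k$-cycle. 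By Proposition~\ref{prop-subdiagram}, it then suffices to show $C \notin \mathcal{B}$.

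If $C$ were completely branching, then by Theorem~\ref{thm-main}(a) together with Proposition~\ref{prop-hitting} there would exist a subset $B \subset C$ that is simultaneously a special transversal and an exact hitting set for the maximal rectangles of $C$. The first key step is to classify these maximal rectangles. Any $2\times 2$ rectangle contained in $C$ would supply a chord to the induced cycle $G(C)$, which is impossible; so every maximal rectangle of $C$ is either one of the $k$ row pairs or one of the $k$ column pairs. This gives $2k$ maximal rectangles in total, and each of the $2k$ boxes of $C$ lies in exactly two of them (one row pair and one column pair).

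Consequently any exact hitting set $B$ has exactly $k$ elements. Under the standard identification of boxes of $C$ with edges of $G(C)$, the row pair at row $i_l$ consists of the two edges incident to $v_{i_l}$, and the column pair at column $j_l$ consists of the two edges incident to $w_{j_l}$; hence $B$ being an exact hitting set translates to $B$ meeting every vertex of $G(C)$ in exactly one edge, i.e., $B$ is a perfect matching of the $2k$-cycle. But then $G(C)$ itself has exactly half of its edges in $B$, so it is a $B$-alternating cycle, contradicting Proposition~\ref{prop-alternating}. Hence $C \notin \mathcal{B}$, and therefore $D \notin \mathcal{B}$. The main point requiring care is the classification of the maximal rectangles of $C$; once that is in hand the perfect-matching/alternating-cycle observation is essentially a single line via Proposition~\ref{prop-alternating}.
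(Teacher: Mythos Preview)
Your proof is correct and follows essentially the same route as the paper: reduce via Proposition~\ref{prop-subdiagram} to the diagram $C$ of an induced $2k$-cycle ($k\ge 3$), then observe that any exact hitting set for its maximal rectangles is a perfect matching of the cycle and hence, by Proposition~\ref{prop-alternating}, cannot be a special transversal, contradicting Proposition~\ref{prop-hitting}. The paper states this more tersely (``the only exact hitting sets for the maximal bicliques of such an even cycle are the two sets of alternate edges, but these are not special transversals''), whereas you spell out the classification of maximal rectangles of $C$ and the counting argument explicitly; the invocation of Theorem~\ref{thm-main}(a) is harmless but unnecessary, since Proposition~\ref{prop-hitting} alone already forces a branching set to be an exact hitting set.
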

\begin{proof}
	By Proposition~\ref{prop-subdiagram} and the equivalence of $\Gamma$-freeable diagrams and chordal bipartite graphs, it suffices to show that if $D$ is the diagram corresponding to an even cycle of length greater than 4, then $D$ does not branch completely. The only exact hitting sets for the maximal bicliques of such an even cycle are the two sets of alternate edges, but these are not special transversals. Hence $D$ does not branch completely by Proposition~\ref{prop-hitting}.
\end{proof}

Interestingly, exact hitting sets for maximal rectangles are automatically special transversals for $\Gamma$-freeable diagrams.
\begin{prop}
	Let $B$ be an exact hitting set for the maximal rectangles of a $\Gamma$-freeable diagram $D$. Then $B$ is a special transversal.
\end{prop}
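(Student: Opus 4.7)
The plan is to translate the problem into graph theory via Proposition~\ref{prop-alternating}: letting $M \subset G(D)$ be the edge set corresponding to $B$, it suffices to show that $M$ is a matching and that $G(D)$ contains no $M$-alternating cycle. Since $D$ is $\Gamma$-freeable, $G(D)$ is chordal bipartite, so every cycle in $G(D)$ of length at least $6$ has a chord. This will be the only structural input beyond the exact hitting set property.

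First I will verify the matching property. If two boxes $b,b' \in B$ shared a row (or column), then $\{b,b'\}$ would be a $1\times 2$ (or $2\times 1$) rectangle of $D$, hence contained in some maximal rectangle $P$; but then $P$ would contain two elements of $B$, contradicting exactness. So no two boxes of $B$ share a row or column, i.e., $M$ is a matching.

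For the absence of $M$-alternating cycles, I will argue by contradiction, taking $C = u_1 u_2 \cdots u_{2k}$ to be an $M$-alternating cycle of minimum length in $G(D)$, with the convention that $u_p u_{p+1} \in M$ if and only if $p$ is odd. In the base case $2k = 4$, the four boxes corresponding to the vertices of $C$ form a $2 \times 2$ rectangle of $D$, which lies in some maximal rectangle; the two $M$-edges of $C$ correspond to boxes of $B$ at diagonally opposite corners, again contradicting exactness. For $2k \geq 6$, chordal bipartiteness supplies a chord $u_i u_j$ with $j - i$ odd (so that $u_i$ and $u_j$ lie in different parts of the bipartition), and this chord splits $C$ into two strictly shorter even cycles $C_1, C_2$. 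A short parity check shows that at each endpoint of the chord the two non-chord edges lying in $C_1$ and $C_2$ have opposite $M$-status, so for each of $C_1$ and $C_2$ there is a unique $M$-status of the chord that makes that smaller cycle alternating, and these two required statuses are opposite. Hence whichever of $M$ or $E(G(D))\setminus M$ the chord actually belongs to, exactly one of $C_1, C_2$ is $M$-alternating, contradicting the minimality of $C$.

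The only nonroutine step is the parity bookkeeping for the chord; I expect this to be the main (very mild) obstacle, handled by fixing the convention $u_pu_{p+1}\in M \iff p$ odd and checking both cases of the parity of $i$. Once the chord-reduction argument eliminates alternating cycles of length $\geq 6$ and exactness eliminates alternating $4$-cycles, Proposition~\ref{prop-alternating} yields that $B$ is a special transversal, as required.
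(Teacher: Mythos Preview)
Your proof is correct and follows essentially the same approach as the paper's: translate to $G(D)$ via Proposition~\ref{prop-alternating}, use chordal bipartiteness to produce a chord in any minimum-length $M$-alternating cycle of length $\geq 6$, reduce to a shorter alternating cycle, and handle length $4$ via exactness. Your version is slightly more careful than the paper's in two respects: you explicitly verify that $M$ is a matching (the paper leaves this implicit, though it is needed), and you do the parity bookkeeping at the chord endpoints directly, whereas the paper phrases the reduction as a counting argument (``one of which also has half of its edges in $B$'').
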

\begin{proof}
	Suppose not. Then by Proposition~\ref{prop-alternating}, in $G(D)$ there must exist a cycle $C$ of minimum length half of whose edges lie in $B$. If $C$ has length greater than $4$, then since $G(D)$ is chordal bipartite, it must have a chord. This chord splits $C$ into two smaller cycles, one of which also has half of its edges in $B$. Thus $C$ must have length $4$, but then it corresponds to a $2 \times 2$ rectangle that contains two boxes in $B$, contradicting the fact that $B$ is an exact hitting set.
\end{proof}

Unfortunately, there exist $\Gamma$-freeable diagrams that do not branch completely. In other words, $\mathcal B$ is properly contained in the class of $\Gamma$-freeable diagrams. 

To see this, consider the following chordal bipartite graph. 
\[
\dr{(0,1)node[w]{}--(0,0)node[v]{}--(1,0)node[w]{}--(1,1)node[v]{}--(2,1)node[w]{}--(2,2)node[v]{}--(1,2)node[w]{}--(1,3)node[v]{}--(0,3)node[w]{}--(0,2)node[v]{}--(-1,2)node[w]{}--(-1,1)node[v]{}--(0,1)--(1,1)--(1,2)--(0,2)--(0,1)}
\]
The center 4-cycle is a maximal biclique, so one of its four edges must lie in any exact hitting set. But removing any of these edges creates a graph that contains an induced 6-cycle, so it cannot correspond to a diagram in $\mathcal B$ by Proposition~\ref{prop-gammafree}. Hence the original graph does not correspond to a completely branching diagram by Theorem~\ref{thm-main}.

\section{Concluding Remarks}
We have given a combinatorial criterion for determining when the Specht module $S^D$ branches completely. We have also shown that the relations in such a Specht module are generated by generalized Garnir relations.

While we have shown that the class $\mathcal B$ of completely branching diagrams contains all northwest and forest diagrams and is properly contained in the class of $\Gamma$-freeable diagrams, we are unable to give a more intrinsic description of $\mathcal B$. Ideally one would like to give a description that makes it relatively easy to check whether a diagram $D$ with $n$ boxes lies in $\mathcal B$ without having to check inductively whether $D \backslash \{x\}$ lies in $B$ for all $x \in D$. It would also be interesting to give any reasonable subclass of $\mathcal B$ that contains both northwest diagrams and forest diagrams.

The main idea behind the proof of Theorem~\ref{thm-main} is to control the relations that are needed to describe $S^D$. A lack of understanding of all relations that can occur in $S^D$ seems to be the main obstacle to applying this idea further. However, this approach may be helpful in determining when, for instance, certain exact sequences of Specht modules exist as in \cite{JamesPeel}. This approach also suggests questions such as how to classify those Specht modules $S^D$ that can be presented using only ordinary Garnir relations, or only generalized Garnir relations.

Finally, this branching rule may be helpful in determining a combinatorial formula for the irreducible decomposition of $S^D$. Such a decomposition is known for northwest diagrams but has not yet been determined for forest diagrams or most other diagrams.

\section{Acknowledgments}
The author would like to thank Victor Reiner and John Stembridge for useful conversations.

\bibliographystyle{plain}
\bibliography{branching}

\begin{thebibliography}{10}

\bibitem{EdelmanGreene}
Paul Edelman and Curtis Greene.
\newblock Balanced tableaux.
\newblock {\em Adv. in Math.}, 63(1):42--99, 1987.

\bibitem{FGRS}
Sergey Fomin, Curtis Greene, Victor Reiner, and Mark Shimozono.
\newblock Balanced labellings and {S}chubert polynomials.
\newblock {\em European J. Combin.}, 18(4):373--389, 1997.

\bibitem{Fulton}
William Fulton.
\newblock {\em Young tableaux}, volume~35 of {\em London Mathematical Society
  Student Texts}.
\newblock Cambridge University Press, Cambridge, 1997.
\newblock With applications to representation theory and geometry.

\bibitem{Kraskiewicz}
Witold Kra{\'s}kiewicz.
\newblock Reduced decompositions in {W}eyl groups.
\newblock {\em European J. Combin.}, 16(3):293--313, 1995.

\bibitem{Liu}
Ricky~Ini Liu.
\newblock Matching polytopes and {S}pecht modules.
\newblock {\em Trans. Amer. Math. Soc.}, 364(2):1089--1107, 2012.

\bibitem{Magyar}
Peter Magyar.
\newblock Borel-{W}eil theorem for configuration varieties and {S}chur modules.
\newblock {\em Adv. Math.}, 134(2):328--366, 1998.

\bibitem{OkounkovVershik}
Andrei Okounkov and Anatoly Vershik.
\newblock A new approach to representation theory of symmetric groups.
\newblock {\em Selecta Math. (N.S.)}, 2(4):581--605, 1996.

\bibitem{JamesPeel}
M.~H. Peel and G.~D. James.
\newblock Specht series for skew representations of symmetric groups.
\newblock {\em J. Algebra}, 56(2):343--364, 1979.

\bibitem{ReinerShimozono1}
Victor Reiner and Mark Shimozono.
\newblock Specht series for column-convex diagrams.
\newblock {\em J. Algebra}, 174(2):489--522, 1995.

\bibitem{ReinerShimozono2}
Victor Reiner and Mark Shimozono.
\newblock Percentage-avoiding, northwest shapes and peelable tableaux.
\newblock {\em J. Combin. Theory Ser. A}, 82(1):1--73, 1998.

\bibitem{Sagan}
Bruce~E. Sagan.
\newblock {\em The symmetric group}, volume 203 of {\em Graduate Texts in
  Mathematics}.
\newblock Springer-Verlag, New York, second edition, 2001.
\newblock Representations, combinatorial algorithms, and symmetric functions.

\bibitem{Spinrad}
Jeremy~P. Spinrad.
\newblock {\em Efficient graph representations}, volume~19 of {\em Fields
  Institute Monographs}.
\newblock American Mathematical Society, Providence, RI, 2003.

\end{thebibliography}
\end{document}